\newtheorem{theorem}{Theorem}[section]
\newtheorem{lemma}[theorem]{Lemma}
\newtheorem{proposition}[theorem]{Proposition}
\newtheorem{remark}[theorem]{Remark}
\newtheorem{conjecture}[theorem]{Conjecture}
\newcommand*{\seclab}[1]{\label{sec:#1}}
\newcommand*{\secref}[1]{Section~\ref{sec:#1}}
\newcommand*{\thmlab}[1]{\label{thm:#1}}
\newcommand*{\thmref}[1]{Theorem~\ref{thm:#1}}
\newcommand*{\proplab}[1]{\label{prop:#1}}
\newcommand*{\propref}[1]{Proposition~\ref{prop:#1}}
\newcommand*{\conjlab}[1]{\label{conj:#1}}
\newcommand*{\conjref}[1]{Conjecture~\ref{conj:#1}}
\newcommand*{\lemlab}[1]{\label{lem:#1}}
\newcommand*{\lemref}[1]{Lemma~\ref{lem:#1}}
\def\zz{{\mathbb Z}}
\def\cala{{\mathcal A}}
\def\calb{{\mathcal B}}
\def\cald{{\mathcal D}}
\def\calg{{\mathcal G}}
\def\cali{{\mathcal I}}
\def\calj{{\mathcal J}}
\def\calk{{\mathcal K}}
\def\calm{{\mathcal M}}
\def\calp{{\mathcal P}}
\def\calq{{\mathcal Q}}
\def\cals{{\mathcal S}}
\def\calw{{\mathcal W}}
\def\bfa{{\bf a}}
\def\bfb{{\bf b}}
\def\bfc{{\bf c}}
\def\bfd{{\bf d}}
\def\bfe{{\bf e}}
\def\bfn{{\bf n}}
\def\widedm{\widetilde{\cald}_M}
\def\wideDM#1{\widetilde{\cald}_{M_{#1}}}
\def\Bff{{\bf F}}
\def\BFF#1{{\bf F}_{#1}}
\def\bfn{{\rm N}}
\def\mqg{\calm_{\calq \calg}}
\def\mq{\calm_{\calq}}
\begin{document}

\title{Toric ideals of series and parallel connections of matroids}
\author{Kazuki Shibata}

%\thanks{
%{\bf 2010 Mathematics Subject Classification:}
%Primary 52B20; Secondary 13H10. 
%\\
%\, \, \, {\bf Keywords:}
%Gorenstein polytopes, cut polytopes, compressed polytopes,
%special simplices.
%}

\address{Kazuki Shibata,
Department of Mathematics,
College of Science,
Rikkyo University,
Toshima-ku, Tokyo 171-8501, Japan.}
\email{k-shibata@rikkyo.ac.jp}

\date{}

\begin{abstract}
In 1980, White conjectured that the toric ideal associated to a matroid is generated by binomials corresponding to a symmetric exchange.
In this paper, we prove that classes of matroids for which the toric ideal is generated by quadrics and that has quadratic Gr\"obner bases, are closed under series and parallel extensions, series and parallel connections, and 2-sums.
\end{abstract}

\maketitle

\section{Introduction}
A matroid has multiple equivalent definitions.
We define a matroid as a collection of subsets that satisfies the {\it exchange axiom}:
A {\it matroid} $M$ is a pair $(E , \calb)$, where $E=[d]=\{ 1,\ldots , d \}$ and $\calb$ is a non-empty collection of subsets of $E$, which satisfies
\begin{itemize}
\item for every $B$ and $B^{'}$ in $\calb$, for any $x \in B \setminus B^{'}$, there exists $y \in B^{'} \setminus B$ such that $(B \cup \{ y \}) \setminus \{ x \}$ is a member of $\calb$. 
\end{itemize}
The exchange axiom is equivalent to the following stronger axiom known as the {\it symmetric exchange property}.
\begin{itemize}
\item For every $B$ and $B^{'}$ in $\calb$, for any $x \in B \setminus B^{'}$, there exists $y \in B^{'} \setminus B$ such that $(B \cup \{ y \}) \setminus \{ x \}$ and $(B^{'} \cup \{ x \}) \setminus \{ y \}$ are members of $\calb$.
\end{itemize}

For a detailed introduction to matroid theory, see \cite{Oxley}.
We call a member of $\calb$ a {\it basis} of $M$.
All the members of $\calb$ have the same cardinality.
This cardinality is said to be the {\it rank} of $M$ and is denoted by ${\rm rk}(M)$.
Let $\calb (M) = \{ B_1 ,\ldots, B_n \}$ be the collection of bases of a matroid $M$ on $E$ and $\cald_M = (\bfb_1 ,\ldots , \bfb_n)$ be an integer matrix satisfying $\bfb_j = \sum_{l \in B_j} \bfe_l$, where $\bfe_l$ is the $l$-th standard vector.
Let $K$ be a field, and let $K[X] = K[x_{1},\ldots , x_{n}]$ be the polynomial ring over $K$.
We consider the ring homomorphism
$$
\pi_M : K[X] \rightarrow K[S] = K[s_1 ,\ldots , s_d], \quad
x_{j} \mapsto S^{\bfb_j},
$$
where $S^{\bfb_j} = s^{b_1}_{1} \cdots s^{b_d}_{d}$ for $\bfb_j = (b_1,\ldots,b_d)$.
The toric ideal $J_{\cald_M}$, or briefly, $J_M$ is the kernel of $\pi_M$ (see \cite{Sturmfels2}).
The semigroup ring $R_M = K[X]/J_M$ is called the {\it bases monomial ring} of $M$, and it was introduced by N. White.
White proved that the bases monomial ring $R_M$ is normal and, in particular, it is a Cohen-Macaulay ring for any matroid $M$ (see \cite{White1}).
White presented the following conjecture.

\begin{conjecture}
[{\rm \cite{White2}}]
\conjlab{White}
{\rm For any matroid $M$, $J_M$ is generated by the binomials $x_i x_j - x_k x_l$ such that the pair of bases $B_k$, $B_l$ can be obtained from the pair of $B_i$, $B_j$ by a symmetric exchange.}
\end{conjecture}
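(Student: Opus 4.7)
The plan is to translate the conjecture into a combinatorial connectivity statement and attack it by induction on the number of bases in a relation. Since $J_M$ is a toric ideal, it is generated by binomials $x^{\bfa}-x^{\bff}$ with $\pi_M(x^{\bfa})=\pi_M(x^{\bff})$, and any such relation is encoded by two $k$-tuples of bases $(B_{i_1},\ldots,B_{i_k})$ and $(B_{j_1},\ldots,B_{j_k})$ with equal total incidence $\sum_s \bfb_{i_s}=\sum_s \bfb_{j_s}$. Declare two $k$-tuples equivalent if they differ by a single symmetric exchange, that is, by replacing two coordinates $B_{i_s}$ and $B_{i_t}$ with $(B_{i_s}\setminus x)\cup y$ and $(B_{i_t}\setminus y)\cup x$ for some permissible $x\in B_{i_s}\setminus B_{i_t}$ and $y\in B_{i_t}\setminus B_{i_s}$. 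The conjecture then becomes the assertion that any two $k$-tuples with the same total incidence lie in the same equivalence class.

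I would proceed by induction on $k$. The case $k=2$ is precisely the symmetric exchange property recalled in the introduction, and so is immediate. For the inductive step, suppose $\mathcal{B}_1=(B_{i_1},\ldots,B_{i_k})$ and $\mathcal{B}_2=(B_{j_1},\ldots,B_{j_k})$ have the same total incidence but are distinct up to reordering. Choose a bijection $\sigma$ between the two index sets minimizing the cumulative symmetric difference $\Phi_\sigma=\sum_s|B_{i_s}\triangle B_{j_{\sigma(s)}}|$, pick an index $s_0$ with $B_{i_{s_0}}\neq B_{j_{\sigma(s_0)}}$ and an element $x\in B_{i_{s_0}}\setminus B_{j_{\sigma(s_0)}}$, and apply the symmetric exchange property to the pair $(B_{i_{s_0}},B_{j_{\sigma(s_0)}})$ to obtain a partner $y\in B_{j_{\sigma(s_0)}}\setminus B_{i_{s_0}}$. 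Because $\mathcal{B}_1$ and $\mathcal{B}_2$ share their total incidence and $y\notin B_{i_{s_0}}$, the element $y$ must occur in some other coordinate $B_{i_{t_0}}$ of $\mathcal{B}_1$. The intended move is the symmetric exchange inside $\mathcal{B}_1$ that swaps $x$ out of $B_{i_{s_0}}$ and $y$ out of $B_{i_{t_0}}$; showing that such an exchange is matroid-valid and strictly decreases $\Phi$ would close the induction.

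The main obstacle is exactly this last step. The symmetric exchange axiom only guarantees a matroid-valid exchange between $B_{i_{s_0}}$ and $B_{j_{\sigma(s_0)}}$, and $B_{j_{\sigma(s_0)}}$ need not lie in $\mathcal{B}_1$; there is no \emph{a priori} reason that $B_{i_{s_0}}$ and $B_{i_{t_0}}$ admit a symmetric exchange moving the specific pair $(x,y)$. Transferring the move into $\mathcal{B}_1$ is a multiple-basis exchange statement whose validity is already the content of the conjecture in degree three, so the induction on $k$ collides with the problem it is trying to solve. To push past this, I would try to augment the inductive framework with the fact that $R_M$ is normal and Cohen-Macaulay, so that a minimal generating set of $J_M$ may be extracted from the $2$-skeleton of the matroid base polytope of $M$ whose edges are precisely the quadratic symmetric exchanges; the goal would be to prove that normality forces every higher-degree syzygy among the vectors $\bfb_j$ to decompose into a sum of quadratic symmetric-exchange syzygies. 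Making this decomposition effective in the generality of arbitrary matroids is the step I do not see how to complete, and it is the point at which attempts on the conjecture for unrestricted $M$ have stalled since 1980.
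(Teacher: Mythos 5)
The statement you are attacking is Conjecture~\ref{conj:White} itself: it is White's 1980 conjecture, which the paper does not prove and which remains open in general. The paper only establishes closure properties --- that the classes $\calm_{\calq}$ and $\calm_{\calq\calg}$ of matroids whose toric ideals are generated by quadrics (resp.\ have quadratic Gr\"obner bases) are closed under series/parallel extensions, series/parallel connections, and 2-sums --- so there is no proof in the paper to compare yours against, and any purported proof would need to be judged as new mathematics. Your proposal is not such a proof, as you yourself concede: the inductive step requires converting the symmetric exchange guaranteed between $B_{i_{s_0}}\in\mathcal{B}_1$ and $B_{j_{\sigma(s_0)}}\in\mathcal{B}_2$ into a matroid-valid exchange of the specific pair $(x,y)$ between two bases $B_{i_{s_0}},B_{i_{t_0}}$ both lying in $\mathcal{B}_1$, and no axiom provides this; such ``transfer'' statements for triples and longer tuples are exactly what the conjecture asserts, so the induction on $k$ is circular at its crucial step. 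In addition, your base case is not as stated: the symmetric exchange property produces, from one pair $(B,B')$ and one $x\in B\setminus B'$, a single exchanged pair; the $k=2$ case of the conjecture instead demands that \emph{any} two pairs of bases with the same multiset union be linked by a chain of symmetric exchanges, which is a nontrivial theorem (in the spirit of White's own work) rather than a restatement of the axiom.

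The proposed rescue via normality also does not close the gap. White's result that $R_M$ is normal (hence Cohen--Macaulay) controls the semigroup, not the degrees of defining equations: there exist normal toric rings whose toric ideals need generators of arbitrarily high degree, so normality alone cannot force every higher syzygy among the vectors $\bfb_j$ to decompose into quadratic symmetric-exchange syzygies, and the ``2-skeleton of the base polytope'' heuristic has no theorem behind it that yields generation of $J_M$ in degree two. If you want results in the direction of the paper, the productive move is the one the paper actually makes: prove that the favorable classes are preserved under structural operations (here via the lifting of Gr\"obner bases through the matrices $\widedm$ and the toric fiber product machinery of Sullivant, combined with combinatorial pure subrings), or target the known special classes (graphic, rank $\le 3$, sparse paving, strongly base orderable, lattice path matroids) rather than the general conjecture.
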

It is natural to ask for the following variant of \conjref{White}.

\begin{conjecture}
\conjlab{WhiteGB}
{\rm For any matroid $M$, $J_M$ has a Gr\"obner basis consisting of the binomials $x_i x_j - x_k x_l$ such that the pair of bases $B_k$, $B_l$ can be obtained from the pair of $B_i$, $B_j$ by a symmetric exchange.}
\end{conjecture}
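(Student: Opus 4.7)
The plan is to attempt an inductive proof combining matroid decomposition with direct verification of Buchberger's criterion. Fix a term order on $K[X]$ induced by a linear order on $E = [d]$: sort the bases $B_1, \ldots, B_n$ lexicographically by their characteristic vectors, and take the reverse lexicographic order on $K[X]$ relative to this ordering of the variables $x_1, \ldots, x_n$. Let $\calg$ denote the set of symmetric-exchange binomials $x_i x_j - x_k x_l$, oriented so that $x_i x_j$ is the leading term. The goal is to show that $\calg$ is a Gr\"obner basis of $J_M$ for every matroid $M$, which is exactly \conjref{WhiteGB}.

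First I would anchor the argument by verifying the statement on structured classes where the bases admit explicit descriptions: uniform matroids $U_{k,n}$, partition matroids, transversal matroids of small size, and small graphic and cographic matroids. For these, the bases can be enumerated and a quadratic Gr\"obner basis of symmetric-exchange type can be exhibited by direct calculation or by computer algebra, giving a library of base cases. This step also doubles as a sanity check on the chosen term order.

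Second I would transport the property along matroid operations. For a direct sum $M = M_1 \oplus M_2$, the ideal $J_M$ decomposes as the sum of (extensions of) $J_{M_1}$ and $J_{M_2}$ in disjoint sets of variables, so the Gr\"obner basis property passes through trivially. Series and parallel connections as well as $2$-sums preserve it, which is precisely the content of this paper's main theorem. The aim is to enlarge this library of closure operations — ideally to include $3$-sums, principal extensions, and certain single-element extensions — until every matroid can be assembled from known building blocks. The central technical step underlying such closure results is Buchberger's criterion: for every pair $g, g' \in \calg$ one must show that the S-polynomial $S(g, g')$ reduces to $0$ modulo $\calg$. A non-trivial overlap between two symmetric-exchange binomials involves three bases sharing a common element, and one must exhibit a sequence of symmetric exchanges resolving the resulting cubic syzygy; this is, in effect, a three-basis symmetric exchange lemma.

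The principal obstacle is that no structural decomposition of arbitrary matroids into elementary pieces is known; this is exactly why \conjref{WhiteGB}, and even the weaker underlying \conjref{White} of White, has resisted proof for decades. Closure theorems of the kind proved in this paper push the problem back to an ``irreducible core'' of matroids that cannot be built up from series/parallel operations and $2$-sums, and it is precisely for this core that new ideas are required. A complete proof would likely demand either a term order intrinsic to the matroid base polytope (rather than induced from an arbitrary linear order on $E$) together with a normality-plus-initial-ideal argument exploiting White's theorem that $R_M$ is normal, or a direct combinatorial analysis of the symmetric-exchange graph on unordered pairs of bases that sidesteps structural decomposition altogether. I do not expect either ingredient to be routine.
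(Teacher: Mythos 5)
There is a genuine gap, and indeed the statement you are trying to prove is not a theorem of the paper at all: it is \conjref{WhiteGB}, stated as an open conjecture. The paper proves only that the classes $\mqg$ and $\mq$ are closed under series and parallel extensions, series and parallel connections, and $2$-sums, and deduces the conjecture for the restricted class of matroids with no minor isomorphic to $M(K_4)$, $\calw^{3}$, $P_6$, or $Q_6$ (\thmref{app2}). Your proposal is a research program, not a proof, and you concede as much in your final paragraph; but it is worth naming precisely where it breaks. The inductive scheme ``base cases plus closure operations'' cannot terminate with the operations you list: direct sums and $2$-sums (and series/parallel operations, which are special cases of $2$-sums with small pieces) only reduce a matroid to its $3$-connected minors, and for $3$-connected matroids no further decomposition of this kind exists. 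That irreducible core is exactly where the conjecture is open, so ``enlarging the library'' of closure results, however far, does not by itself yield the statement for all matroids. Hoped-for extensions such as closure under $3$-sums or general single-element extensions are themselves unproven and there is no reason offered why they would be any easier than the conjecture itself.

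Two further specific problems. First, your fixed term order (reverse lexicographic relative to a lex ordering of the basis characteristic vectors) is a concrete choice, and it is not known --- and unlikely --- that this single order works uniformly; even for graphic matroids only generation by quadrics is known (\cite{Blasiak}), not a quadratic Gr\"obner basis, so your ``library of base cases'' would already be incomplete beyond uniform matroids and the other special classes cited in the paper. Second, the ``three-basis symmetric exchange lemma'' you invoke to make S-polynomials of symmetric-exchange binomials reduce to zero is not a technical step one can defer: resolving those cubic syzygies for arbitrary matroids is essentially equivalent to the open content of \conjref{White} and \conjref{WhiteGB}. So the proposal contains no argument that closes the conjecture; the honest conclusion, consistent with the paper, is that the statement remains open and only the closure theorems and the minor-excluded class are actually established.
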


\conjref{White} is true for graphic matroids \cite{Blasiak}, matroids with ${\rm rank} \le 3$ \cite{Kashiwabara}, sparse paving matroids \cite{Bonin} and strongly base orderable matroids \cite{LaMich}.
\conjref{WhiteGB} is true for uniform matroids \cite{Sturmfels}, matroids with ${\rm rank} \le 2$ \cite{Blum,OhsugiHibi}, graphic matroids without $M(K_4)$-minor \cite{Blum} and lattice path matroids \cite{Sch}.

Let $\mqg$ be the class of matroids for which the toric ideal $J_M$ has a quadratic Gr\"obner basis, let $\mq$ be the class of matroids for which $J_M$ is generated by quadrics, and let $\calm$ be the class of all matroids.
The following is weaker than \conjref{White} and \conjref{WhiteGB}.
However it is still open.

\begin{conjecture}
\conjlab{Whiteweak}
{\rm The equalities $\mqg = \mq = \calm$ hold.}
\end{conjecture}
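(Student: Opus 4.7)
The plan is to exploit the closure operations established in this very paper as the bulk of an induction. Series and parallel connections together with $2$-sums are exactly the ingredients used in Seymour's decomposition along $1$- and $2$-separations: every connected matroid that is not $3$-connected decomposes into strictly smaller pieces via a parallel connection, series connection, or $2$-sum. Consequently, if one could verify the conjecture for all $3$-connected matroids, the paper's closure theorems would propagate it up to all matroids. So the strategy is to reduce to the $3$-connected case and then attack that case directly.

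First I would handle disconnectedness: a direct sum $M_1 \oplus M_2$ has $\cald_{M_1 \oplus M_2}$ equal to the Cartesian product of base polytopes, and $R_{M_1 \oplus M_2}$ is a Segre-type construction in $R_{M_1}$ and $R_{M_2}$; one expects standard arguments to show $\mq$ and $\mqg$ are closed under direct sums. Combined with the paper's closure under $2$-sums, this reduces the problem to the $3$-connected case. Next I would try to treat $3$-connected matroids by splitting into representability classes. For representable matroids one could invoke the Geelen--Gerards--Whittle matroid minors structure theorem over a fixed finite field, which describes $3$-connected members of minor-closed classes in terms of perturbations of frame and co-frame matroids; the hope is that the perturbation data interacts controllably with the Gr\"obner deformation, so one could bootstrap from graphic matroids (Blasiak for $\mq$) and uniform matroids (Sturmfels for $\mqg$) through the known building blocks: wheels, whirls, spikes, swirls.

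The main obstacle, and the reason I expect this to remain genuinely open, is twofold. On the combinatorial side, no analogue of Seymour's decomposition is available for non-representable matroids, so there is no global reduction to a finite list of $3$-connected building blocks. In particular, sparse paving matroids, which are believed to dominate $\calm$ asymptotically, typically cannot be constructed from smaller matroids by series/parallel/$2$-sum operations; the closure theorems of this paper therefore do not cover them, and one has to appeal separately to results such as Bonin's. On the algebraic side, even once a decomposition is in hand, turning a symmetric-exchange generator or Gr\"obner certificate for the pieces into one for the composite requires compatibility between the chosen monomial order on $R_M$ and the combinatorial structure of the decomposition; this is precisely what makes $\mqg$ potentially harder than $\mq$, and a uniform order compatible with arbitrary iterated $2$-sums is the technical heart one would need. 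I would therefore expect to make genuine progress only for classes where both a decomposition theorem and a canonical monomial order are available (graphic, cographic, regular, lattice path, strongly base orderable), and to leave the full equality $\mqg = \mq = \calm$ as a target for future structural theory.
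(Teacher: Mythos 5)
This statement is a conjecture that the paper explicitly leaves open, and your proposal is not a proof of it; it is a reduction plus a research programme, and you say as much in your final sentence. The reduction itself is sound and is in fact exactly what the paper does: the closure of $\mqg$ and $\mq$ under direct sums (Proposition~\ref{prop:Blu}) and under $2$-sums (the paper's main theorem in Section~3) lets one decompose any matroid that is not $3$-connected into $3$-connected proper minors, so the paper's own conclusion is precisely that Conjecture~\ref{conj:Whiteweak} would follow from its validity for all $3$-connected matroids. Up to that point you and the paper coincide (modulo a naming quibble: the decomposition of a connected, non-$3$-connected matroid into $3$-connected pieces via $2$-sums is the Cunningham--Edmonds tree decomposition and holds for all matroids, representable or not, so your worry that ``no analogue of Seymour's decomposition is available for non-representable matroids'' does not affect this reduction step --- it only blocks the further reduction of the $3$-connected pieces to a finite list of building blocks).

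The genuine gap is the $3$-connected case, and the mechanism you propose for it does not exist. Invoking the Geelen--Gerards--Whittle structure theorem gives a description of $3$-connected members of minor-closed classes over a fixed finite field as bounded-rank perturbations of frame and co-frame matroids, but there is no known result (in this paper or elsewhere) showing that membership in $\mq$ or $\mqg$ is preserved under such perturbations; ``the perturbation data interacts controllably with the Gr\"obner deformation'' is a hope, not a step. The paper's operations (series/parallel extension and connection, $2$-sum, duality, minors, direct sums) are the only transfer principles available here, and $3$-connected matroids are by definition the ones these operations cannot reach. Additionally, your fallback list of known cases is slightly off for the Gr\"obner-basis half: Bonin's result on sparse paving matroids and Blasiak's on graphic matroids address generation by symmetric-exchange quadrics (Conjecture~\ref{conj:White}), not quadratic Gr\"obner bases, so they do not feed into $\mqg$. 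The correct conclusion --- which both you and the paper reach --- is that the statement remains a conjecture, reduced but unproved.
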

Conca proved that the class of transversal polymatroids is contained in $\mq$ \cite{Conca}.

Let $M$ be a matroid on $E$, and let $\calb(M)$ be the collection of bases of $M$.
An element $i \in E$ is called a {\it loop} of $M$ if it does not belong to any basis of $M$.
Dually, an element $i \in E$ is said to be a {\it coloop} of $M$ if it is contained in all the bases of $M$.
Let
$
\calb^{\ast} (M) = \{ E \setminus B ~|~ B \in \calb (M) \}.
$
Then a pair $(E,\calb^{\ast} (M))$ is a matroid, and it is called the {\it dual} of $M$ and is denoted as $M^{\ast}$.

Let $M$ and $\calb(M)$ be as above, and let $c \in E$.
We consider the following collection of subsets of $E\setminus \{ c \}$:
$$
\calb(M) \setminus c =
\begin{cases}
\{ B \setminus \{ c \} ~|~ B \in \calb(M) \} & \text{if~$c$~is~a~coloop~of~$M$,} \\
\{ B ~|~ c \notin B \in \calb(M) \} & \text{otherwise.}
\end{cases}
$$
A pair $(E \setminus \{ c \} , \calb(M) \setminus c)$ is a matroid, and it is called the {\it deletion} of $c$ from $M$ and is denoted as $M \setminus c$.
Dually, let $M/c$, the {\it contraction} of $c$ from $M$, be given by $M/c=(M^{\ast} \setminus c)^{\ast}$.
We call a matroid $M^{'}$ a {\it minor} of a matroid $M$ if $M^{'}$ can be obtained from $M$ by a finite sequence of contractions and deletions.

Let $M_1$ and $M_2$ be matroids with $E_1 \cap E_2 = \emptyset$.
Let $\calb(M_1)$ and $\calb(M_2)$ be collections of bases of $M_1$ and $M_2$, and let
$$
\calb(M_1) \oplus \calb(M_2) = \{ B \cup D ~|~ B \in \calb(M_1) , D \in \calb(M_2) \}.
$$
Then a pair $(E, \calb(M_1) \oplus \calb(M_2))$, where $E=E_1 \cup E_2$, is a matroid.
This matroid is called the {\it direct sum} of $M_1$ and $M_2$, and it is denoted as
$M_1 \oplus M_2$.

\begin{proposition}
[{\rm \cite{Blum,White2}}]
\proplab{Blu}
Classes $\mqg$ and $\mq$ are closed under the duality, taking minors and direct sums.
\end{proposition}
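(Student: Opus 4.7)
The complementation $B \mapsto E\setminus B$ replaces each column $\bfb_j$ of $\cald_M$ by $\mathbf{1}-\bfb_j$, where $\mathbf{1}$ denotes the all-ones vector of length $d$. Since every column of $\cald_M$ has coordinate sum ${\rm rk}(M)$, any pure binomial in $K[X]$ must have matched total degrees on the two sides, and the linear relation $\sum u_j\bfb_j=\sum v_j\bfb_j$ is then equivalent to $\sum u_j(\mathbf{1}-\bfb_j)=\sum v_j(\mathbf{1}-\bfb_j)$. Hence $J_M=J_{M^{\ast}}$ as ideals in the very same ring $K[X]$, so closure of $\mq$ and $\mqg$ under duality is automatic.

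\textbf{Minors.} By duality it suffices to handle deletion. If $c$ is a loop (resp.\ a coloop), $\cald_{M\setminus c}$ differs from $\cald_M$ only by the removal of a zero row (resp.\ an all-ones row), and $J_M=J_{M\setminus c}$ verbatim. Otherwise $\calb(M\setminus c)=\{B\in\calb(M) : c\notin B\}$ is a proper subset of $\calb(M)$ and $J_{M\setminus c}=J_M\cap K[x_j : c\notin B_j]$. The key observation is that projecting any relation $\sum u_j\bfb_j=\sum v_j\bfb_j$ onto the $c$-coordinate yields $\sum_{c\in B_j}u_j=\sum_{c\in B_j}v_j$, so the variables $\{x_i : c\in B_i\}$ (call them the $c$-variables) appear with equal total degree on the two sides of every binomial of $J_M$; in particular, every quadric of $J_M$ either involves no $c$-variable or is killed by the substitution $x_i\mapsto 0$ for each $i$ with $c\in B_i$. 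Applying this substitution to any expression of $f\in J_{M\setminus c}$ in terms of quadratic generators of $J_M$ shows that $\mq$ is closed under deletion. For $\mqg$, we pick an elimination order $<'$ that places the $c$-variables above the others, upgrade the given quadratic Gröbner basis of $J_M$ to one with respect to $<'$ using the same parity observation on $S$-pair reductions, and then read off a quadratic Gröbner basis of $J_{M\setminus c}$ as the subset free of $c$-variables, invoking the elimination theorem.

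\textbf{Direct sums.} Index the bases of $M_1\oplus M_2$ by pairs $(i,j)$ and write $x_{ij}$ for the associated variable. The column of $\cald_{M_1\oplus M_2}$ at $(i,j)$ is the concatenation $(\bfb_i,\bfd_j)$, so a pure binomial in $K[x_{ij}]$ lies in $J_{M_1\oplus M_2}$ iff its $E_1$-projection lies in $J_{M_1}$ and its $E_2$-projection lies in $J_{M_2}$; equivalently, $R_{M_1\oplus M_2}$ is the Segre product of $R_{M_1}$ and $R_{M_2}$. The substitution $x_{ij}\mapsto y_iz_j$ has kernel generated by the Segre quadrics $x_{ij}x_{kl}-x_{il}x_{kj}$, and quadratic generators (respectively, quadratic Gröbner basis elements with respect to suitable orders) of $J_{M_1}$ and $J_{M_2}$ lift to quadrics in $K[x_{ij}]$; combined with the Segre quadrics and analyzed under a lexicographic refinement of the product order, these yield quadratic generators (respectively a quadratic Gröbner basis) for $J_{M_1\oplus M_2}$ via a Buchberger check. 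The one genuinely nontrivial step is upgrading the quadratic Gröbner basis in the deletion case to the chosen elimination order, which requires the toric-specific parity argument above rather than an off-the-shelf order-change; all the remaining verifications are essentially formal once the coordinate-projection observations are in hand.
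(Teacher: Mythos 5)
The paper states this proposition without proof (it is quoted from Blum and White), so I am comparing your argument against the standard proofs in the cited literature. Your treatment of duality is correct: since every column of $\cald_M$ has coordinate sum ${\rm rk}(M)$, every binomial in the kernel is homogeneous, and replacing $\bfb_j$ by $\mathbf{1}-\bfb_j$ preserves all such relations, so $J_M=J_{M^{\ast}}$ in the same ring and both closure statements are immediate. Your direct-sum argument (Segre product, i.e.\ the toric fiber product over a single degree) is the standard one and is fine in outline. For deletion, your identification of the key structural fact is also right: the $c$-th coordinate is a nonnegative linear functional on the columns vanishing exactly on the columns indexed by $\{x_j : c\notin B_j\}$, so any binomial of $J_M$ with one monomial free of $c$-variables has both monomials free of $c$-variables; your substitution argument then correctly gives closure of $\mq$ under deletion, and contraction follows by duality.

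The genuine gap is the $\mqg$ part of the deletion step. You propose to pass to an elimination order $<'$ placing the $c$-variables on top and to ``upgrade'' the given quadratic Gr\"obner basis to one with respect to $<'$. There is no justification for this: a toric ideal can have a quadratic Gr\"obner basis for one order and fail to have one for an elimination order, and the parity observation says nothing about how $S$-pairs reduce after the order is changed. The elimination theorem would then only hand you a Gr\"obner basis of $J_{M\setminus c}$ of whatever degrees the new basis happens to have. The correct tool --- and the one the paper itself leans on elsewhere --- is the combinatorial pure subring theorem of Ohsugi--Herzog--Hibi \cite{OHH,Ohsugi}: precisely because of the face property you already isolated, for \emph{any} monomial order $<$ on $K[X]$ one has ${\rm in}_{<}(J_M)\cap K[x_j : c\notin B_j]={\rm in}_{<}(J_{M\setminus c})$, and the elements of the reduced Gr\"obner basis of $J_M$ lying in $K[x_j : c\notin B_j]$ form the reduced Gr\"obner basis of $J_{M\setminus c}$ for the induced order. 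No change of order is needed, and quadraticity is inherited directly. With that substitution your proof is complete.
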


The outline of this paper is as follows.
In Section 2, we describe how to compute generating sets and Gr\"obner bases for the toric ideal of series and parallel extensions.
In Section 3, we use the results from Section 2 to form generating sets and Gr\"obner bases for the toric ideal of series and parallel connections and 2-sums.

\section{A series and parallel extension of a matroid}
\seclab{0}

Let $M$ be a matroid on $E=[d]$, and let $\calb(M)$ be the collection of bases of $M$.
Then a {\it series extension} of $M$ at $c \in E$ by $d+1$ is a matroid on $E \cup \{ d+1 \}$ which has
$$
\{
B \cup \{ d+1 \} ~|~ B \in \calb(M)
\} \cup \{ B   \cup \{ c \} ~|~ c \notin B \in \calb (M)  \}
$$
as the collection of bases and is denoted as $M +_{c} (d+1)$.
Dually, we call a matroid $(M^{\ast} +_{c} (d+1))^{\ast}$ a {\it parallel extension} of $M$ at $c$ by $d+1$.
A {\it series-parallel extension} of $M$ is any matroid derived from $M$ by a finite sequence of series and parallel extensions.
We suppose that $M$ does not have $c \in E$ as a coloop.
Let $\calb(M) = \{ B_1 , \ldots , B_{\gamma} , \ldots , B_n \}$ be the collection of bases of $M$, where $c \notin B_j$ for $j \in [\gamma]$ and $c \in B_j$ for $j \in [n] \setminus [\gamma]$.
We renumber the bases of $M$, if necessary.
Let $\cald_M = ( \bfb_{1} ,\ldots , \bfb_{n} )$ denote an integer matrix arising from $M$.
Now we consider a new integer matrix
$$
\widedm = 
\left(
\begin{array}{ccc|ccc}
\bfb_1 & \cdots & \bfb_n & \bfb_1 & \cdots & \bfb_{\gamma} \\
\bfe_1 & \cdots & \bfe_1 & \bfe_2 & \cdots & \bfe_2
\end{array}
\right).
$$
We define the ring homomorphism $\widetilde{\pi}_M$ as follows:
\begin{eqnarray*}
\widetilde{\pi}_M : K[X]=K[x^{1}_{1} ,\ldots , x^{1}_{n} , x^{2}_{1} ,\ldots , x^{2}_{\gamma}] & \rightarrow & K[S,w_1,w_2], \\
x^{i}_{j} & \mapsto & S^{\bfb_{j}} w_i.
\end{eqnarray*}

Let $\omega \in \zz^{n}_{\ge 0}$, and let $\prec$ be an arbitrary monomial order.
We define a new monomial order ${\prec}_{\omega}$ as follows:
$$
X^{\bfa} {\prec}_{\omega} X^{\bfb} \Leftrightarrow
\begin{cases}
 \omega \cdot \bfa  <  \omega \cdot \bfb  ~; ~\text{{\rm or}} \\
 \omega \cdot \bfa  =  \omega \cdot \bfb  ~\text{{\rm and}}
~ X^{\bfa} \prec X^{\bfb},
\end{cases}
$$
for $\bfa , \bfb \in \zz^{n}_{\ge 0}$.
We call a monomial order ${\prec}_{\omega}$ a {\it weight order} on $K[x_1,\ldots , x_n]$.
We use the following useful result:
\begin{proposition}
[{\rm \cite[Proposition 1.11]{Sturmfels}}]
For any monomial order $\prec$ and any ideal $I \subset K[x_1,\ldots,x_n]$, there exists a vector $w \in \zz^{n}_{\ge 0}$ such that ${\rm in}_{\omega} (I) = {\rm in}_{\prec} (I)$.
\end{proposition}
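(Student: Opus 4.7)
The plan is to reduce the existence of $\omega$ to a finite separation problem in $\zz^n$ using a reduced Gr\"obner basis of $I$ with respect to $\prec$. I would first fix such a basis $G=\{g_1,\ldots,g_s\}$, which exists because the monomial ideal ${\rm in}_\prec(I)$ is finitely generated, and write
\[
g_i \;=\; X^{a_{i,0}} - \sum_{j=1}^{k_i} c_{i,j}\,X^{a_{i,j}}, \qquad X^{a_{i,0}} = {\rm in}_\prec(g_i),
\]
so that $X^{a_{i,j}} \prec X^{a_{i,0}}$ for every $j \geq 1$. Collecting the finite set of exponent-vector differences
\[
\Delta \;=\; \{\, a_{i,0}-a_{i,j} \,:\, 1\leq i\leq s,\ 1\leq j\leq k_i\,\} \;\subset\; \zz^n \setminus \{0\},
\]
it suffices to produce $\omega\in\zz^n_{\geq 0}$ with $\omega\cdot d>0$ for every $d\in\Delta$, because then ${\rm in}_\omega(g_i)=X^{a_{i,0}}={\rm in}_\prec(g_i)$ for each $i$.

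The hard step is constructing this $\omega$, and I would do so by contradiction via a Gordan/Farkas alternative. If no such $\omega$ existed, there would be non-negative rationals $\lambda_d$, not all zero, with $\sum_{d\in\Delta}\lambda_d d=0$; clearing denominators yields non-negative integers $\mu_d$, not all zero, satisfying $\sum \mu_d d=0$. Decomposing each $d=d^+-d^-$ into positive and negative parts, this gives the monomial equality $X^{\sum \mu_d d^+} = X^{\sum \mu_d d^-}$. On the other hand, $X^{d^+}\succ X^{d^-}$ for every $d\in\Delta$ (because $d\neq 0$), and multiplicativity of $\prec$ combined with $\mu_d\geq 0$ and at least one $\mu_d>0$ forces $\prod_d X^{\mu_d d^+} \succ \prod_d X^{\mu_d d^-}$, contradicting the equality above. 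A valid rational $\omega$ therefore exists, and by scaling it can be taken in $\zz^n_{\geq 0}$.

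Finally, for such an $\omega$ I would observe that the single-monomial initial forms ${\rm in}_\omega(g_i)=X^{a_{i,0}}={\rm in}_\prec(g_i)$ agree with the $\prec_\omega$-leading monomials of the $g_i$, so $G$ remains a Gr\"obner basis of $I$ with respect to the weight refinement $\prec_\omega$ defined in the excerpt. A standard fact from Gr\"obner basis theory, that the $\omega$-initial forms of a $\prec_\omega$-Gr\"obner basis generate the $\omega$-initial ideal, then yields
\[
{\rm in}_\omega(I) \;=\; \langle {\rm in}_\omega(g_1),\ldots,{\rm in}_\omega(g_s)\rangle \;=\; \langle {\rm in}_\prec(g_1),\ldots,{\rm in}_\prec(g_s)\rangle \;=\; {\rm in}_\prec(I),
\]
as required. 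The main obstacle is the convex-geometric step establishing the existence of $\omega$; the remaining parts are routine bookkeeping with the definitions of initial ideals and Gr\"obner bases.
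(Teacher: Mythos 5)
This proposition is quoted in the paper as a citation to \cite[Proposition~1.11]{Sturmfels}; the paper supplies no proof of its own, so there is nothing internal to compare against. Your argument is the standard one from the literature: reduce to finitely many strict inequalities $\omega\cdot d>0$ coming from the reduced Gr\"obner basis, settle existence by the Gordan/Farkas alternative using multiplicativity of $\prec$, and then pass from agreement of initial terms on $G$ to ${\rm in}_{\omega}(I)={\rm in}_{\prec}(I)$ via the fact that the $\omega$-initial forms of a $\prec_{\omega}$-Gr\"obner basis generate ${\rm in}_{\omega}(I)$. That outline is sound.

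There is one genuine gap: the claim that the separating vector ``can be taken in $\zz^{n}_{\ge 0}$ by scaling.'' The Gordan alternative applied to $\Delta$ alone produces $\omega$ with $\omega\cdot d>0$ for all $d\in\Delta$ but imposes no sign condition on the coordinates of $\omega$, and positive scaling cannot repair a negative entry. The non-negativity is not cosmetic: your final step needs $\prec_{\omega}$ to be a genuine monomial order (a well-order), which fails for weight vectors with negative entries. The standard fix is to enlarge $\Delta$ to $\Delta\cup\{\bfe_1,\ldots,\bfe_n\}$; since $\prec$ is a monomial order one has $x_i\succ 1$, so each $\bfe_i$ is a ``positive'' difference vector and your Gordan contradiction goes through verbatim for the enlarged set, yielding $\omega\cdot\bfe_i>0$, i.e.\ $\omega\in\zz^{n}_{>0}$. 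With that adjustment the proof is complete. You should also spell out, rather than merely assert, why $G$ remains a Gr\"obner basis for $\prec_{\omega}$ (e.g.\ via $\langle{\rm in}_{\prec_{\omega}}(g_i)\rangle={\rm in}_{\prec}(I)\subseteq{\rm in}_{\prec_{\omega}}(I)$ together with equality of Hilbert functions of the two initial ideals), but that part is indeed routine.
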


Let $\Bff$ be a homogeneous generating set for $J_{\cald_M}$, and let
$$
f = 
\underline{\prod_{l=1}^{u_f} x^{1}_{j_l} 
\prod_{l=1}^{v_f} x^{1}_{k_l}
}-
\prod_{l=1}^{u^{'}_f} x^{1}_{j^{'}_l}
\prod_{l=1}^{v^{'}_f} x^{1}_{k^{'}_l}
\in \Bff ,
$$
where $j_l , j^{'}_{l} \in [\gamma], k_l , k^{'}_l \in [n] \setminus [\gamma]$.
However, if $v_f \neq v^{'}_f$, then $\pi_M (f) \ne 0$ since the $c$-th entry of $\sum_{l=1}^{v_f} \bfb_{k_l}$ does not coincide with the $c$-th entry of $\sum_{l=1}^{v^{'}_f} \bfb_{k^{'}_l}$, and the c-th entries of $\sum_{l=1}^{u_f} \bfb_{j_l}$ and $\sum_{l=1}^{u^{'}_f} \bfb_{j^{'}_l}$ are zero.
Therefore $u_f = u^{'}_{f}$ and $v_f = v^{'}_{f}$.
Now let $I = (i_1 ,\ldots , i_{u_f}) \in \{ 1,2 \}^{u_f}$ and consider the binomial $f^{I} \in K[X]$ defined by
$$
f^{I} =
\underline{\prod_{l=1}^{u_f} x^{i_l}_{j_l} 
\prod_{l=1}^{v_f} x^{1}_{k_l}
}-
\prod_{l=1}^{u_f} x^{i_l}_{j^{'}_l}
\prod_{l=1}^{v_f} x^{1}_{k^{'}_l}
.
$$
Since $f \in J_{\cald_M}$, the new homogeneous binomial $f^{I} \in J_{\widedm}$.
We set
$$
\widetilde{\Bff} = \{ f^{I} ~|~ f \in \Bff , I \in \{ 1,2 \}^{u_f} \} \cup \{ \underline{x^{1}_{j_2} x^{2}_{j_1}} - x^{1}_{j_1} x^{2}_{j_2} ~|~ 1 \le j_1 < j_2 \le \gamma \}.
$$

\begin{lemma}
\lemlab{main1}
Let $M$ be a matroid on $E$, and let
$\Bff$ be a Gr\"obner basis for $J_{\cald_M}$.
Then $\widetilde{\Bff}$ is a Gr\"obner basis for $J_{\widedm}$.
\end{lemma}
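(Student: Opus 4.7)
The plan is to construct an explicit monomial order $\widetilde{\prec}$ on $K[\widetilde{X}]$, verify $\widetilde{\Bff} \subseteq J_{\widedm}$, identify the leading monomials of $\widetilde{\Bff}$, and then show that every monomial of ${\rm in}_{\widetilde{\prec}}(J_{\widedm})$ is divisible by some leading monomial of $\widetilde{\Bff}$. The containment $\widetilde{\Bff} \subseteq J_{\widedm}$ is an immediate check: under $\widetilde{\pi}_M$, both terms of the sync binomial $x^1_{j_2}x^2_{j_1} - x^1_{j_1}x^2_{j_2}$ map to $S^{\bfb_{j_1}+\bfb_{j_2}} w_1 w_2$, and the two terms of $f^I$ have matching exponents of $w_1, w_2$ (determined by $I$ and $v_f$) and matching $S$-exponents (by $f \in J_{\cald_M}$).

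Introduce the projection $p \colon K[\widetilde{X}] \to K[X]$, $x^i_j \mapsto x_j$, and define $\widetilde{\prec}$ by $X^{\alpha} \widetilde{\prec} X^{\beta}$ iff $p(X^{\alpha}) \prec p(X^{\beta})$, with primary ties broken by the pure lex order on $K[\widetilde{X}]$ under $x^2_1 > x^2_2 > \cdots > x^2_\gamma > x^1_1 > x^1_2 > \cdots > x^1_n$. Both components are multiplicative, so $\widetilde{\prec}$ is a monomial order. For the sync binomial with $j_1 < j_2$, primary ties and the secondary selects $x^1_{j_2} x^2_{j_1}$ since its $x^2_{j_1}$-variable outranks the $x^2_{j_2}$ of $x^1_{j_1} x^2_{j_2}$; for each $f^I$, primary already selects $\prod x^{i_l}_{j_l}\prod x^1_{k_l}$ because its $p$-image is the leading monomial of $f$ under $\prec$. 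Moreover, any \emph{sorted} monomial $X^{\alpha}$---one with no factor $x^1_{j_2} x^2_{j_1}$ for $j_1 < j_2$---is the $\widetilde{\prec}$-minimum of its \emph{sync-equivalence class}: the set of monomials with the same $p$-image and the same counts $\sum_j \alpha^1_j$ and $\sum_j \alpha^2_j$. Indeed, each unsorting step from $X^{\alpha}$ increments $\alpha^2_{j_1}$ by one (with $j_1$ the smallest affected index) while leaving all earlier lex coordinates fixed, so the lex vector strictly rises.

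Now take any $X^{\alpha} \in {\rm in}_{\widetilde{\prec}}(J_{\widedm})$. If $X^{\alpha}$ is not sorted, a sync leading monomial divides it; otherwise, pick $X^{\beta}$ with $X^{\alpha} - X^{\beta} \in J_{\widedm}$ and $X^{\alpha} \widetilde{\succ} X^{\beta}$. Matching $w_1, w_2$ exponents in $\widetilde{\pi}_M$ give $\sum_j \alpha^1_j = \sum_j \beta^1_j$ and $\sum_j \alpha^2_j = \sum_j \beta^2_j$; if in addition $p(X^{\alpha}) = p(X^{\beta})$, then $X^{\beta}$ lies in the sync-equivalence class of $X^{\alpha}$, but $X^{\alpha}$ being its class-minimum contradicts $X^{\alpha} \widetilde{\succ} X^{\beta}$. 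Hence $p(X^{\alpha}) \neq p(X^{\beta})$, whence primary gives $p(X^{\alpha}) \succ p(X^{\beta})$ and $p(X^{\alpha}) \in {\rm in}_{\prec}(J_{\cald_M})$. Some $f \in \Bff$ then has leading monomial $X^{a'} = \prod x_{j_l}\prod x_{k_l}$ dividing $p(X^{\alpha})$; for each $j \in [\gamma]$, $m_j = |\{l : j_l = j\}| \leq \alpha^1_j + \alpha^2_j$, so setting $n^1_j = \min(m_j, \alpha^1_j)$ and $n^2_j = m_j - n^1_j$ gives $0 \leq n^i_j \leq \alpha^i_j$, and any $I \in \{1,2\}^{u_f}$ realizing these counts produces an $f^I$ whose leading monomial $\prod x^{i_l}_{j_l}\prod x^1_{k_l}$ divides $X^{\alpha}$.

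The main obstacle lies in the sorted case: sortedness is precisely what rules out the ``fake'' situation $p(X^{\alpha}) = p(X^{\beta})$, the obstruction to pushing the witness into $J_{\cald_M}$; and the secondary lex must be chosen so that sorted monomials are class-minima while the sync binomials also receive their underlined leading monomials. Once these delicate points are pinned down, the greedy construction of $I$ is elementary.
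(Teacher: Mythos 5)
Your proof is correct and follows essentially the same route as the paper's: both compare monomials through the projection $x^{i}_{j}\mapsto x_{j}$ so as to invoke the Gr\"obner basis property of $\Bff$ downstairs, use the binomials $x^{1}_{j_2}x^{2}_{j_1}-x^{1}_{j_1}x^{2}_{j_2}$ to reduce to sorted monomials, and lift divisibility by ${\rm in}_{\prec}(f)$ to divisibility by ${\rm in}(f^{I})$ for a suitably chosen $I$. The differences are presentational: you build the order explicitly as a pullback refined by a lex tiebreak and argue via divisibility of monomials in the initial ideal, whereas the paper uses a weight order with an unspecified tiebreak and the equivalent ``standard monomials with equal image coincide'' criterion, leaving the class-minimum property of sorted monomials and the greedy construction of $I$ implicit.
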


\begin{proof}
First, it is easy to see that $\widetilde{\Bff} \subset J_{\widedm}$.
Let $\omega = (\omega^{1}_{1},\ldots,\omega^{1}_{n})$ be a weight vector.
We denote the underlined monomial of $f$ as the initial monomial of $f$ with respect to a weight order induced by $\omega$.
Let $\widetilde{\omega}=(\omega^{1}_{1},\ldots,\omega^{1}_{n} ,\omega^{2}_{1} , \ldots , \omega^{2}_{\gamma})$
denote a weight vector satisfying $\omega^{1}_{j} = \omega^{2}_{j}$ for $j \in [\gamma]$.
Then the underlined monomial of $f^{I}$ is the initial monomial of $f^{I}$ with respect to a weight order $\prec_{\widetilde{\omega}}$.
We choose a tie-breaking monomial order on $K[X]$ that makes the monomial $x^{1}_{j_2} x^{2}_{j_1}$ for $1 \le j_1 < j_2 \le \gamma$ the initial monomial.
Let ${\rm in}(\Bff) = \langle {\rm in}_{\omega}(f) ~|~ f \in \Bff \rangle$ and ${\rm in}(\widetilde{\Bff}) = \langle {\rm in}_{\prec_{\widetilde{\omega}}}(f) ~|~ f \in \widetilde{\Bff} \rangle$.
Let $u$ and $v$ be monomials which are not in ${\rm in}(\widetilde{\Bff})$:
\begin{eqnarray*}
u &=& \prod_{l=1}^{m_1} (x^{1}_{i_{l}})^{p_l}
\prod_{l=1}^{m_2} (x^{2}_{j_{l}})^{q_l}
\prod_{l=1}^{m_3} (x^{1}_{k_{l}})^{r_l} \\
v &=& \prod_{l=1}^{m^{'}_1} (x^{1}_{i^{'}_{l}})^{p^{'}_l}
\prod_{l=1}^{m^{'}_2} (x^{2}_{j^{'}_{l}})^{q^{'}_l}
\prod_{l=1}^{m^{'}_{3}} (x^{1}_{k^{'}_{l}})^{r^{'}_l},
\end{eqnarray*}
where $p_l$, $q_l$, $r_l$, $p^{'}_l$, $q^{'}_l$, $r^{'}_l \in \zz_{>0}$ for any $l$, and $\cali = \{ i_1 , \ldots , i_{m_1} \}$, $\cali^{'} = \{ i^{'}_1 , \ldots , i^{'}_{m^{'}_1} \}$, $\calj = \{ j_1 , \ldots , j_{m_2} \}$, and $\calj^{'} = \{ j^{'}_1 , \ldots , j^{'}_{m^{'}_2} \}$ are subsets of $[\gamma]$ with cardinalities $m_1$, $m^{'}_1$, $m_2$, and $m^{'}_2$, respectively; and $\calk = \{ k_1 , \ldots , k_{m_3} \}$ and $\calk^{'} = \{ k^{'}_1 , \ldots , k^{'}_{m^{'}_3} \}$ are subsets of $[n] \setminus [\gamma ]$ with cardinalities $m_3$ and $m^{'}_3$, respectively.
Since neither $u$ nor $v$ is divided by $x^{1}_{j_2} x^{2}_{j_1}$ for $1 \le j_1 < j_2 \le \gamma$, it follows that $i_l \le j_{l^{'}}$ for $l \in [m_1]$ and $l^{'} \in [m_2]$,  and $i^{'}_{l} \le j^{'}_{l^{'}}$ for $l \in [m^{'}_1]$ and $l^{'} \in [m^{'}_2]$.
We suppose that $\widetilde{\pi}_M (u) = \widetilde{\pi}_M (v)$, i.e., 
$p+q=p^{'}+q^{'}$, $r=r^{'}$ and
$$
\sum_{l=1}^{m_1} p_l \bfb_{i_l} +
\sum_{l=1}^{m_2} q_l \bfb_{j_l} +
\sum_{l=1}^{m_3} r_l \bfb_{k_l} =
\sum_{l=1}^{m^{'}_1} p^{'}_l \bfb_{i^{'}_l} +
\sum_{l=1}^{m^{'}_2} q^{'}_l \bfb_{j^{'}_l} +
\sum_{l=1}^{m^{'}_3} r^{'}_l \bfb_{k^{'}_l}.
$$
Here we set $p=\sum_{l=1}^{m_1} p_l$, $q=\sum_{l=1}^{m_2} q_l$, $r=\sum_{l=1}^{m_3} r_l$, $p^{'}=\sum_{l=1}^{m^{'}_1} p^{'}_l$, $q^{'}=\sum_{l=1}^{m^{'}_2} q^{'}_l$, and $r^{'}=\sum_{l=1}^{m^{'}_3} r^{'}_l$.
Therefore it follows that $\pi_M (u^{'}) = \pi_M (v^{'})$, where
\begin{eqnarray*}
u^{'} &=& \prod_{l=1}^{m_1} (x^{1}_{i_{l}})^{p_l}
\prod_{l=1}^{m_2} (x^{1}_{j_{l}})^{q_l}
\prod_{l=1}^{m_3} (x^{1}_{k_{l}})^{r_l}, \\
v^{'} &=& \prod_{l=1}^{m^{'}_1} (x^{1}_{i^{'}_{l}})^{p^{'}_l}
\prod_{l=1}^{m^{'}_2} (x^{1}_{j^{'}_{l}})^{q^{'}_l}
\prod_{l=1}^{m^{'}_3} (x^{1}_{k^{'}_{l}})^{r^{'}_l}.
\end{eqnarray*}
Hence $u^{'} - v^{'}$ belongs to $J_{\cald_M}$.
If $u^{'}$ or $v^{'}$ belongs to ${\rm in} (\Bff)$, then $u^{'}$ or $v^{'}$ is in ${\rm in} (\widetilde{\Bff})$.
In particular, $u$ or $v$ belongs to ${\rm in}(\widetilde{\Bff})$.
This is a contradiction.
Therefore neither $u^{'}$ nor $v^{'}$ belongs to ${\rm in} (\Bff)$.
Since $\Bff$ is a Gr\"obner basis for $J_{\cald_M}$, it follows that $u^{'} = v^{'}$.
In particular, $\cali = \cali^{'}$, $\calj = \calj^{'}$, $\calk = \calk^{'}$, $p_l = p^{'}_l$, $q_l = q^{'}_l$, and $r_l = r^{'}_l$ for any $l$.
Thus $u=v$.
Therefore $\widetilde{\Bff}$ is a Gr\"obner basis for $J_{\widedm}$.
\end{proof}

\begin{lemma}
\lemlab{gen}
Let $M$ be a matroid on $E$.
If $\Bff$ is a homogeneous generating set for $J_{\cald_M}$, then $\widetilde{\Bff}$ is a generating set for $J_{\widedm}$.
\end{lemma}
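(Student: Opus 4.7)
The plan is to adapt the argument of \lemref{main1} to the generating-set setting via a move-lifting scheme. Given a homogeneous binomial $u - v \in J_{\widedm}$ with $u, v$ monomials, I would first use the straightening relations $\{x^{1}_{j_2}x^{2}_{j_1} - x^{1}_{j_1}x^{2}_{j_2} : 1 \le j_1 < j_2 \le \gamma\} \subset \widetilde{\Bff}$ to reduce $u$ and $v$ modulo $(\widetilde{\Bff})$ to their \emph{normal forms}---monomials in which, among the $[\gamma]$-indexed positions, the label-$2$ positions are precisely those of largest index. Such a normal-form monomial is uniquely determined by three data: the multiset $T$ of $[\gamma]$-indices appearing, the multiset $K$ of $[n]\setminus[\gamma]$-indices appearing, and the total number $|q|$ of label-$2$ positions.

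Next, I would identify $x^{1}_j = x^{2}_j = x_j$ to obtain $u', v' \in K[x_1, \ldots, x_n]$. Reading off the $w_1$-, $w_2$-, and $s_c$-exponents of $\widetilde{\pi}_M(u) = \widetilde{\pi}_M(v)$, exactly as in the proof of \lemref{main1}, forces the label counts $|p|, |q|, |r|$ to agree for $u$ and $v$; and matching the $S$-parts yields $\pi_M(u') = \pi_M(v')$, so $u' - v' \in J_{\cald_M} = (\Bff)$ by hypothesis. Invoking the standard dictionary for toric ideals (a binomial set generates the ideal iff it connects all fibers of the corresponding lattice map via moves), one then obtains a sequence $u' = n_0 \to n_1 \to \cdots \to n_N = v'$ of $\Bff$-moves.

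I would then lift this sequence inductively, starting with $\hat n_0 = u$. At step $i$, assume $\hat n_i \in K[X]$ projects to $n_i$. The $\Bff$-move $n_i \to n_{i+1}$ is performed by some $f = m - m' \in \Bff$ with $m \mid n_i$; by the proof of \lemref{main1}, $m$ and $m'$ contain the same number $u_f$ of variables indexed by $[\gamma]$, so the $\{1,2\}$-labels currently on those $u_f$ positions in $\hat n_i$ pick out an $I \in \{1, 2\}^{u_f}$ making $f^I \in \widetilde{\Bff}$ applicable; let $\hat n_{i+1}$ be the result. Since the transfer preserves the total count of each label, after $N$ steps $\hat n_N$ projects to $v'$ and shares with $v$ all three invariants $(T, K, |q|)$. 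A final round of straightening then brings $\hat n_N$ into the same (unique) normal form as $v$, namely $v$ itself, so chaining everything gives $u - v \in (\widetilde{\Bff})$.

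The main obstacle will be making the lifting in the third step fully rigorous: verifying that at each $\hat n_i$ the labels on the relevant $u_f$ positions really do yield a well-defined choice of $I$ for which $f^I$ produces the desired $\hat n_{i+1}$. This hinges on the identity $u_f = u^{'}_f$ established in the proof of \lemref{main1}, which guarantees the labels on the $u_f$ label-carrying positions of $m$ can be transferred one-to-one to those of $m'$; once this is granted, the remainder of the argument is combinatorial bookkeeping on labeled multisets of indices.
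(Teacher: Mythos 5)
Your argument is correct, but it is a genuinely different route from the paper's. The paper's proof is indirect: it fixes a second set $\Bff^{'}$ that is a Gr\"obner basis for $J_{\cald_M}$, applies \lemref{main1} to conclude that $\widetilde{\Bff^{'}}$ generates $J_{\widedm}$, and then asserts that $\langle \widetilde{\Bff} \rangle = \langle \widetilde{\Bff^{'}} \rangle$ whenever $\langle \Bff \rangle = \langle \Bff^{'} \rangle$ --- i.e.\ that the ideal generated by the lifted set depends only on the ideal generated by $\Bff$, not on $\Bff$ itself. That transfer claim is stated without justification, and it is really the heart of the matter. Your move-lifting argument proves the lemma directly: reduce to normal form with the straightening relations, project to $K[x_1,\ldots,x_n]$ to get an element of $J_{\cald_M}$, connect its two monomials by $\Bff$-moves (using the standard equivalence between generation of a toric ideal and connectivity of its fibers), lift each move to an $f^{I}$-move by reading off the labels, and restore normal form at the end. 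The bookkeeping you flag as the ``main obstacle'' does go through: the $[n]\setminus[\gamma]$-indexed variables carry only label $1$, so only the $u_f = u^{'}_f$ label-carrying positions need a choice of $I$, any admissible choice yields an element of $\widetilde{\Bff}$ since all $I \in \{1,2\}^{u_f}$ are included, the label counts are preserved at every step, and the straightening normal form is uniquely determined by $(T,K,|q|)$. What your approach buys is a self-contained proof that does not lean on the unproved invariance step; what the paper's approach buys is brevity and a reduction to \lemref{main1}, but as written it leaves a gap that your argument would in fact fill (indeed, your lifting argument is essentially what is needed to justify the paper's assertion that $\widetilde{\Bff}$ and $\widetilde{\Bff^{'}}$ generate the same ideal).
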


\begin{proof}
We assume that $\Bff$ and $\Bff^{'}$ are generating sets for $J_{\cald_M}$.
Then $\widetilde{\Bff}$ and $\widetilde{\Bff^{'}}$ generate the same ideal.
In particular, this holds if $\Bff^{'}$ is a Gr\"obner basis for $J_{\cald_M}$.
Thus $\langle \widetilde{\Bff} \rangle = \langle \widetilde{\Bff^{'}} \rangle$.
By \lemref{main1}, if $\Bff^{'}$ is a Gr\"obner basis for $J_{\cald_M}$, then $\widetilde{\Bff^{'}}$ is a generating set for $J_{\widedm}$, since $\widetilde{\Bff^{'}}$ is a Gr\"obner basis for $J_{\widedm}$.
\end{proof}

\begin{theorem}
\thmlab{sepa}
Let $M$ be a matroid on $E$, and let $M +_{c} (d + 1)$ denote a series extension of $M$ at $c$ by $d+1$.
Then $\widetilde{\Bff}$ becomes a generating set (resp. a Gr\"obner basis) for $J_{M +_{c} (d+1)}$. 
\end{theorem}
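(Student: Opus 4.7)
The plan is to reduce the theorem to \lemref{main1} and \lemref{gen} by establishing that the toric ideals $J_{\widedm}$ and $J_{M +_{c} (d+1)}$ coincide; granting this identification, the generating-set statement follows from \lemref{gen} and the Gr\"obner basis statement from \lemref{main1}.

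To identify the two ideals, I would first write the toric map for $M +_{c} (d+1)$ explicitly. Under the natural bijection between the bases of $M +_{c} (d+1)$ and the variables $x^{i}_{j}$, the map $\pi_{M +_{c} (d+1)} : K[X] \to K[s_{1}, \ldots , s_{d+1}]$ sends $x^{1}_{j} \mapsto s_{d+1} S^{\bfb_{j}}$ for $j \in [n]$ and $x^{2}_{j} \mapsto s_{c} S^{\bfb_{j}}$ for $j \in [\gamma]$. One has $\pi_{M +_{c} (d+1)} = \varphi \circ \widetilde{\pi}_{M}$, where $\varphi : K[S , w_{1} , w_{2}] \to K[s_{1}, \ldots , s_{d+1}]$ substitutes $w_{1} \mapsto s_{d+1}$, $w_{2} \mapsto s_{c}$, and $s_{l} \mapsto s_{l}$; hence $\ker \widetilde{\pi}_{M} \subseteq \ker \pi_{M +_{c} (d+1)}$.

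For the reverse inclusion, take a binomial $u - v \in \ker \pi_{M +_{c} (d+1)}$ and write $u = \prod (x^{i}_{j})^{\alpha^{i}_{j}}$, $v = \prod (x^{i}_{j})^{\beta^{i}_{j}}$. Since $\pi_{M +_{c} (d+1)}$ is homogeneous of degree ${\rm rk}(M) + 1$, we have $\deg u = \deg v$. Matching $s_{d+1}$-exponents gives $\sum_{j} \alpha^{1}_{j} = \sum_{j} \beta^{1}_{j}$, and combining with the degree identity gives $\sum_{j} \alpha^{2}_{j} = \sum_{j} \beta^{2}_{j}$, i.e., the missing $w_{2}$-equality for $\widetilde{\pi}_{M}$. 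Using $b_{j,c} = 0$ for $j \in [\gamma]$ and $b_{j,c} = 1$ for $j \in [n] \setminus [\gamma]$, the $s_{c}$-equation reads $\sum_{j > \gamma} \alpha^{1}_{j} + \sum_{j} \alpha^{2}_{j} = \sum_{j > \gamma} \beta^{1}_{j} + \sum_{j} \beta^{2}_{j}$; together with the previous identity this yields $\sum_{j > \gamma} \alpha^{1}_{j} = \sum_{j > \gamma} \beta^{1}_{j}$, which is precisely the $s_{c}$-exponent equality needed for $\widetilde{\pi}_{M}(u) = \widetilde{\pi}_{M}(v)$. The remaining $s_{l}$-equations for $l \ne c$ are identical in both maps, so $u - v \in \ker \widetilde{\pi}_{M}$, and hence $J_{\widedm} = J_{M +_{c} (d+1)}$. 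The only delicate point in the whole argument is this $s_{c}$-bookkeeping, which rests on the asymmetric role of bases containing $c$ versus those avoiding $c$ in the definition of $M +_{c} (d+1)$; everything else is a formal consequence of the preceding lemmas.
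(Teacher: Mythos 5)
Your proposal is correct and amounts to the same identification the paper makes: the paper disposes of the theorem in one line by noting that elementary row operations turn $\widetilde{\cald}_M$ into the integer matrix of $M +_{c}(d+1)$, whereas you verify the resulting equality $J_{\widedm} = J_{M +_{c}(d+1)}$ directly at the level of the toric maps, using homogeneity and the $s_c$-exponent bookkeeping to recover the $w_1$- and $w_2$-equations. This is a legitimate (and more explicit) expansion of the paper's one-sentence argument, after which the appeal to Lemmas \ref{lem:main1} and \ref{lem:gen} is exactly as in the paper.
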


\begin{proof}
By elementary row operations on $\widetilde{\cald}_M$, we obtain the integer matrix arising from $M +_{c} (d+1)$.
\end{proof}

\begin{remark}
{\rm If $c$ is a coloop of $M$, then $J_{M +_{c} (d+1)} = J_M$.}
\end{remark}

\begin{theorem}
Classes $\mqg$ and $\mq$ are closed under series and parallel extensions.
\end{theorem}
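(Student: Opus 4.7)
The plan is to reduce everything to \thmref{sepa} together with \propref{Blu}. Fix $M \in \mq$ (respectively $M \in \mqg$) and let $\Bff$ be a quadratic homogeneous generating set (respectively a quadratic Gröbner basis) for $J_{\cald_M}$. I first treat the series extension $M +_c (d+1)$.

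If $c$ is a coloop of $M$, the remark gives $J_{M +_c (d+1)} = J_M$, so there is nothing to prove. Otherwise, I unpack the definition of $\widetilde{\Bff}$ and check degrees. Each generator of $\widetilde{\Bff}$ is either of the form $f^{I}$ obtained from some $f \in \Bff$ by replacing superscripts in the factors indexed by $[\gamma]$, or one of the quadratic binomials $x^{1}_{j_2} x^{2}_{j_1} - x^{1}_{j_1} x^{2}_{j_2}$ with $1 \le j_1 < j_2 \le \gamma$. Crucially, $f^{I}$ has the same total degree as $f$, because the construction only rewrites the superscript, not the degree, of each variable. Therefore, if $\Bff$ consists of quadrics, so does $\widetilde{\Bff}$. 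Invoking \thmref{sepa}, $\widetilde{\Bff}$ generates (resp.\ is a Gröbner basis of) $J_{M +_c (d+1)}$, so $M +_c (d+1) \in \mq$ (resp.\ $\in \mqg$).

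For the parallel extension, I would use the definition $M +_c^{\parallel} (d+1) = (M^{\ast} +_c (d+1))^{\ast}$ together with \propref{Blu}. Starting from $M \in \mq$ (resp.\ $\mqg$), closure under duality gives $M^{\ast} \in \mq$ (resp.\ $\mqg$); the series-extension case just proved gives $M^{\ast} +_c (d+1) \in \mq$ (resp.\ $\mqg$); and closure under duality again yields the parallel extension in the same class. The only subtlety is the degenerate case where $c$ is a loop of $M$ (equivalently a coloop of $M^{\ast}$), which is absorbed by the remark applied to $M^{\ast}$.

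The main step is the degree bookkeeping in the first paragraph: one must be certain that every element of $\widetilde{\Bff}$ is quadratic whenever $\Bff$ is, which reduces to the observation that the map $f \mapsto f^{I}$ preserves total degree and that the extra binomials are tautologically quadratic. Everything else is a formal combination of \thmref{sepa}, the remark, and \propref{Blu}, so no genuine obstacle is expected.
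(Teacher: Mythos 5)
Your proposal is correct and follows the route the paper intends: the theorem is stated without proof precisely because it is the immediate consequence of \thmref{sepa} (noting that $f\mapsto f^{I}$ preserves degree and the added binomials are quadratic) combined with duality via \propref{Blu} for the parallel case. Your handling of the degenerate coloop/loop cases via the remark is also the right bookkeeping.
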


\section{A series and parallel connection of matroids}

Let $M_1$ and $M_2$ be matroids with $E_1 \cap E_2 = \{ c \}$ and $E = E_1 \cup E_2$.
Suppose that for both $M_1$ and $M_2$, $c$ is neither a loop nor a coloop.
Let
\begin{eqnarray*}
\calb_{\cals} &=& \{ B \cup D ~|~ B \in \calb(M_1), D \in \calb(M_2) , B \cap D = \emptyset \}, \\
\calb_{\calp} &=& \{ B \cup D ~|~ B \in \calb(M_1), D \in \calb(M_2) , c \in B \cap D \} \\
& & \cup \{ (B \cup D) \setminus \{ c \} ~|~ B \in \calb(M_1) , D \in \calb(M_2) , \text{$c$~is~in~exactly~one~of~$B$~and~$D$} \}.
\end{eqnarray*}
Then pairs $(E,\calb_{\cals})$ and $(E,\calb_{\calp})$ are matroids.
These matroids are said to be the {\it series} and {\it parallel} connections of $M_1$ and $M_2$ with respect to the basepoint $c$.
We denote them as $S((M_1;c) , (M_2;c))$ and $P((M_1;c) , (M_2;c))$, or briefly, $S(M_1 , M_2)$ and $P(M_1 , M_2)$ \cite[Proposition 7.1.13]{Oxley}.

On the other hand, when $c$ is a loop of $M_1$, then we define
$$
P(M_1 , M_2) = M_1 \oplus (M_2 / c) \quad \text{{\rm and}} \quad
S(M_1 , M_2) = (M_1 / c) \oplus M_2.
$$
When $c$ is a coloop of $M_1$, then we define
$$
P(M_1 , M_2) = (M_1 \setminus c) \oplus M_2 \quad \text{{\rm and}} \quad
S(M_1 , M_2) = M_1 \oplus (M_2 \setminus c)
$$
(see \cite[7.1.5 - 7.1.8]{Oxley}).
Moreover, the 2-{\it sum} $M_1 \oplus_{2} M_2$ of $M_1$ and $M_2$ is $S(M_1 , M_2) / c$, or equivalently, $P(M_1 , M_2) \setminus c$, where $c$ is neither a loop nor a coloop of either $M_1$ or $M_2$.

Let $M_1$ and $M_2$ be matroids on $E_1 = [d_1]$ and $E_2= [d_2]$.
We identify the set $[d_2]$ with the set $\{ d_1 + 1,\ldots , d_1 +d_2\}$.
Assume that $c_i \in E_i$ is not a coloop of $M_i$ for $i \in [2]$.
Let
$$
\calb(M_1) = \{ B_1 , \ldots , B_{\gamma_1} , \ldots, B_{n_1} \} 
\quad \text{{\rm and}} \quad
\calb(M_2) = \{ D_1 , \ldots , D_{\gamma_2} , \ldots, D_{n_2} \}
$$
be collections of bases of $M_1$ and $M_2$, where $c_1 \notin B_j$ for $j \in [\gamma_1]$ and $c_2 \notin D_k$ for $k \in [\gamma_2]$.
Let $\cald_{M_1} = ( \bfb_{1} ,\ldots, \bfb_{n_1} )$ and $\cald_{M_2} = ( \bfd_{1} ,\ldots, \bfd_{n_2} )$ be two integer matrices arising from $M_1$ and $M_2$.
We define ring homomorphisms $\pi_{M_1}$ and $\pi_{M_2}$ by setting
\begin{eqnarray*}
\pi_{M_1} : K[x^{1}_{1},\ldots, x^{1}_{n_1}] & \rightarrow & K[S] \qquad x^{1}_{j} \mapsto S^{\bfb_{j}} , \\
\pi_{M_2} : K[y^{2}_{1} ,\ldots , y^{2}_{n_2}] & \rightarrow & K[T] \qquad y^{2}_{k} \mapsto T^{\bfd_{k}}.
\end{eqnarray*}
Similar to what we did in \secref{0}, we consider two integer matrices
\begin{eqnarray*}
\wideDM{1} &=& \left(
\begin{array}{ccc|ccc}
\bfb_1 & \cdots & \bfb_{n_1} & \bfb_1 & \cdots & \bfb_{\gamma_1} \\
\bfe_1 & \cdots & \bfe_1 & \bfe_2 & \cdots & \bfe_2
\end{array} \right) ,
\\
\wideDM{2} &=& \left(
\begin{array}{ccc|ccc}
\bfd_1 & \cdots & \bfd_{{\gamma}_2} & \bfd_1 & \cdots & \bfd_{n_2} \\
\bfe_1 & \cdots & \bfe_1 & \bfe_2 & \cdots & \bfe_2
\end{array} \right).
\end{eqnarray*}
We define ring homomorphisms $\widetilde{\pi}_{M_1}$ and $\widetilde{\pi}_{M_2}$ as follows:
\begin{eqnarray*}
\widetilde{\pi}_{M_1} : K[X]=K[x^{1}_{1},\ldots, x^{1}_{n_1} , x^{2}_{1} ,\ldots , x^{2}_{\gamma_1}] & \rightarrow & K[S,w_1,w_2], \qquad x^{i}_{j} \mapsto S^{\bfb_{j}} w_{i}, \\
\widetilde{\pi}_{M_2} : K[Y]=K[y^{1}_{1} ,\ldots , y^{1}_{\gamma_2} , y^{2}_{1} , \ldots , y^{2}_{n_2}] & \rightarrow & K[T,w_1,w_2], \qquad y^{i}_{k} \mapsto T^{\bfd_{k}} w_{i}.
\end{eqnarray*}
Moreover, consider the integer matrix
$$
\widetilde{\cald} =
\left(
\begin{array}{ccc|c|ccc|ccc|c|ccc}
\bfb_1 & & \bfb_1 && \bfb_{n_1} && \bfb_{n_1}&\bfb_1&&\bfb_1&&\bfb_{\gamma_1}&&\bfb_{\gamma_1} \\
\bfd_1 &\cdots &\bfd_{\gamma_2} & \cdots & \bfd_1& \cdots & \bfd_{\gamma_2}& \bfd_1& \cdots& \bfd_{n_2}& \cdots & \bfd_1 & \cdots & \bfd_{n_2} \\
\bfe_1 & & \bfe_1 && \bfe_1 & & \bfe_1 & \bfe_2&& \bfe_2&& \bfe_2 & & \bfe_2 
\end{array}
\right).
$$
Let $K[Z]= K[z^{i}_{jk} ~|~ i=1,2, j \in [\alpha_i] , k \in [\beta_i]]$ be the polynomial ring over $K$, where $(\alpha_1 , \alpha_2) = (n_1 , \gamma_1)$ and $(\beta_1 , \beta_2) = (\gamma_2 , n_2)$.
The ring homomorphism $\widetilde{\pi}$ is defined by
$$
\widetilde{\pi} : K[Z]  \rightarrow  K[S,T,w_1,w_2], \qquad
z^{i}_{jk}  \mapsto  S^{\bfb_{j}} T^{\bfd_{k}} w_{i}.
$$

Let $\BFF{1}$ and $\BFF{2}$ be homogeneous generating sets for $J_{\cald_{M_1}}$ and $J_{\cald_{M_2}}$, respectively.
Then we define $\widetilde{{\bf F}}_1$ and $\widetilde{{\bf F}}_2$ in a way analogous to what we did in \secref{0}.
Let
$$
f = \prod_{l=1}^{u_f} x^{i_l}_{j^{1}_l} -
\prod_{l=1}^{u_f} x^{i_l}_{j^{2}_l} \in \widetilde{{\bf F}}_1,
$$
and let $k=(k_1 , \ldots , k_{u_f})$, with $k_l \in [\beta_{i_l}]$ for $1 \le l \le u_f$.
We consider the binomial $f_k \in K[Z]$ defined by
$$
f_k = \prod_{l=1}^{u_f} z^{i_l}_{j^{1}_l k_l} -
\prod_{l=1}^{u_f} z^{i_l}_{j^{2}_l k_l}.
$$
Since $f \in J_{\widetilde{\cald}_{M_1}}$, the new homogeneous binomial $f_k \in J_{\widetilde{\cald}}$.
If $\widetilde{{\bf F}}_1$ is any set of binomials in $J_{\widetilde{\cald}_{M_1}}$, then 
$$
{\rm Lift}(\widetilde{{\bf F}}_1) =
\left\{
f_k ~\left|~ f \in \widetilde{{\bf F}}_1, k \in \prod_{l=1}^{u_f} [\beta_{i_l}]
\right.
\right\}
.
$$
We define ${\rm Lift}(\widetilde{{\bf F}}_2)$ in an analogous way.
Furthermore, the quadratic binomial set ${\rm Quad}(\widetilde{\cald}_{M_1},\widetilde{\cald}_{M_2})$ is defined by
$$
{\rm Quad}(\widetilde{\cald}_{M_1},\widetilde{\cald}_{M_2}) = 
\left\{
z^{i}_{j_1 k_2} z^{i}_{j_2 k_1} -
z^{i}_{j_1 k_1} z^{i}_{j_2 k_2}
~\left|~
i=1,2 , 
\begin{array}{l}
1 \le j_1 < j_2 \le \alpha_i , \\
1 \le k_1 < k_2 \le \beta_i
\end{array}
\right.
\right\}.
$$
We set $\widetilde{\bfn}={\rm Lift}(\widetilde{{\bf F}}_1) \cup {\rm Lift}(\widetilde{{\bf F}}_2) \cup {\rm Quad}(\widetilde{\cald}_{M_1},\widetilde{\cald}_{M_2})$.

\begin{lemma}
\lemlab{main2}
Let $M_1$ and $M_2$ be matroids on $E_1 = [d_1]$ and $E_2=[d_2]$, respectively; and assume that $c_i \in E_i$ is not a coloop of $M_i$ for $i=1,2$.
Let $S(M_1,M_2)$ be a series connection of $M_1$ and $M_2$ with respect to the basepoint $c=c_1=c_2$.
Then
$$
N =\widetilde{\bfn} \cap
K[\widehat{Z}]
$$
is a generating set for $J_{S(M_1,M_2)}$.
Here we set $K[\widehat{Z}]=K[z^{i}_{jk} ~|~ i =1,2 , j \in [\alpha_i] , k \in V_i]$, where $V_1 = [\gamma_2]$ and $V_2 = [n_2] \setminus [\gamma_2]$.
Moreover, if $\BFF{1}$ and $\BFF{2}$ are Gr\"obner bases for $J_{\cald_{M_1}}$ and $J_{\cald_{M_2}}$, then there exists a monomial order such that $\bfn$ is a Gr\"obner basis for $J_{S(M_1,M_2)}$.
\end{lemma}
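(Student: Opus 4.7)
The plan is to mimic the direct Gr\"obner basis argument of \lemref{main1}, adapted to the two-dimensional ``Segre-like'' structure encoded by $\widetilde{\cald}$. A preliminary observation is the toric identification
$$
J_{S(M_1,M_2)} = J_{\widetilde{\cald}} \cap K[\widehat{Z}],
$$
from which $N \subseteq J_{S(M_1,M_2)}$ follows immediately since $\widetilde{\bfn} \subseteq J_{\widetilde{\cald}}$. For monomials $u, v \in K[\widehat{Z}]$, the implication $\widetilde{\pi}(u) = \widetilde{\pi}(v) \Rightarrow \pi_S(u) = \pi_S(v)$ is clear. For the converse, the basis condition $B \cap D = \emptyset$ in $S(M_1,M_2)$ ensures that $c$ lies in at most one of $B$ or $D$; combined with the rank identity $|B_{j_l}| = {\rm rk}(M_1)$ (which fixes the total sum of $\sum_l \bfb_{j_l}$ in terms of the length), agreement of $\pi_S(u)$ and $\pi_S(v)$ on the non-$c$ coordinates of $E_1$ forces agreement also at the $c$-coordinate of $\sum_l \bfb_{j_l}$, and similarly for the $M_2$-part; the $w_i$-exponents of $\widetilde{\pi}$ are then forced by these $c$-coordinates.

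For the Gr\"obner basis claim, I would define a monomial order on $K[\widehat{Z}]$ by the weight $\omega(z^i_{jk}) = \omega_1(x^i_j) + \omega_2(y^i_k)$, with $\omega_1, \omega_2$ giving $\widetilde{{\bf F}}_1$ and $\widetilde{{\bf F}}_2$ as Gr\"obner bases of $J_{\widetilde{\cald}_{M_1}}$ and $J_{\widetilde{\cald}_{M_2}}$ via \lemref{main1}, together with a tie-breaking order selecting $z^i_{j_1 k_2} z^i_{j_2 k_1}$ as the initial monomial of each quadric in $N$. Given standard monomials $u, v \in K[\widehat{Z}]$ with $\pi_S(u) = \pi_S(v)$, consider the projections $u|_X = \prod_l x^{i_l}_{j_l} \in K[X]$ and $u|_Y = \prod_l y^{i_l}_{k_l} \in K[Y]$, and similarly $v|_X$, $v|_Y$; the identification above yields $u|_X - v|_X \in J_{\widetilde{\cald}_{M_1}}$ and $u|_Y - v|_Y \in J_{\widetilde{\cald}_{M_2}}$. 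The main obstacle will be showing that $u|_X$ is itself standard with respect to $\widetilde{{\bf F}}_1$: if ${\rm in}(f)$ divides $u|_X$ for some $f \in \widetilde{{\bf F}}_1$, then for each factor $x^{i_l}_{j^1_l}$ of ${\rm in}(f)$ there is a corresponding factor $z^{i_l}_{j^1_l k_l}$ of $u$, and taking these $k_l$'s produces a lift $f_k \in {\rm Lift}(\widetilde{{\bf F}}_1)$ whose initial monomial $\prod_l z^{i_l}_{j^1_l k_l}$ divides $u$. The crucial observation is that for every $l$ with $i_l = 2$, the chosen $k_l$ is the $k$-index of a type-$2$ factor of $u \in K[\widehat{Z}]$, forcing $k_l \in [n_2] \setminus [\gamma_2]$, so $f_k \in K[\widehat{Z}]$ and hence $f_k \in N$, contradicting standardness of $u$. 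The Gr\"obner basis property of $\widetilde{{\bf F}}_1$ and $\widetilde{{\bf F}}_2$ then gives $u|_X = v|_X$ and $u|_Y = v|_Y$; the quadrics in $N$ enforce a sorted normal form on the $(j,k)$-pairs within each type $i$, whence the agreeing multisets of $j$- and $k$-indices per type force $u = v$.

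For the generating-set statement without the Gr\"obner hypothesis, I would follow the pattern of \lemref{gen}: taking Gr\"obner bases $\BFF{1}', \BFF{2}'$ of $J_{\cald_{M_1}}$ and $J_{\cald_{M_2}}$, the corresponding $\widetilde{\bfn}'$ is a Gr\"obner basis and hence generates the same ideal in $K[Z]$ as the $\widetilde{\bfn}$ built from the arbitrary homogeneous generating sets $\BFF{1}, \BFF{2}$; intersecting with $K[\widehat{Z}]$ gives that $N$ and $N'$ generate the same ideal, and $N'$ is a generating set for $J_{S(M_1,M_2)}$ by the Gr\"obner basis case already established.
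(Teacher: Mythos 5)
Your route is genuinely different from the paper's. The paper obtains the statement by quoting Sullivant's toric fiber product theorem (\propref{fiber}) to conclude that $\widetilde{\bfn}$ is a generating set (resp.\ Gr\"obner basis) of $J_{\widetilde{\cald}}$, and then passes to $K[\widehat{Z}]$ by exhibiting $K[\widehat{Z}]/J_{\cald}$ as a \emph{combinatorial pure subring} of $K[Z]/J_{\widetilde{\cald}'}$ via the explicit functional $\delta=(0,\ldots,0,-1)$, finishing with row operations to identify $J_{\cald}$ with $J_{S(M_1,M_2)}$. You instead verify the Gr\"obner claim from scratch in $K[\widehat{Z}]$ by the standard-monomial criterion. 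Your identification $J_{S(M_1,M_2)}=J_{\widetilde{\cald}}\cap K[\widehat{Z}]$ (recovering the $c$-coordinate from the rank and the $w_i$-exponents from the $c$-coordinates) is correct and replaces the paper's row-operation step, and your key observation --- that the lift $f_k$ read off from the factors of a monomial $u\in K[\widehat{Z}]$ automatically has $k_l\in V_{i_l}$ and hence lies in $N$, so that $u|_X$ is standard for $\widetilde{{\bf F}}_1$ --- is exactly the point that makes the restriction to $K[\widehat{Z}]$ work; the rest is the usual sorting argument for Segre-type quadrics. Modulo the usual care with the tie-breaking order (as in Sullivant's construction), this part is sound.

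The genuine gap is in your final paragraph. From $\langle\widetilde{\bfn}\rangle=\langle\widetilde{\bfn}'\rangle$ in $K[Z]$ you cannot conclude that $\langle\widetilde{\bfn}\cap K[\widehat{Z}]\rangle=\langle\widetilde{\bfn}'\cap K[\widehat{Z}]\rangle$ in $K[\widehat{Z}]$: intersecting two generating sets of the same ideal with a subring can yield different ideals (for instance $\{x^2-z,\ z-y^2\}$ and $\{x^2-y^2,\ z-y^2\}$ generate the same ideal of $K[x,y,z]$, but their intersections with $K[x,y]$ generate $(0)$ and $(x^2-y^2)$, respectively). What rescues the argument is the face property that the paper's $\delta$-computation encodes: if $m_1-m_2\in J_{\widetilde{\cald}}$ is a binomial with $m_1\in K[\widehat{Z}]$, then $m_2\in K[\widehat{Z}]$ as well, because the $t_{c}$-exponent of $\widetilde{\pi}(m)$ is at most its $w_2$-exponent for every monomial $m$, with equality precisely when $m\in K[\widehat{Z}]$. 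Granting this, let $\rho\colon K[Z]\to K[\widehat{Z}]$ send the variables outside $\widehat{Z}$ to $0$; then $\rho$ maps each binomial of $\widetilde{\bfn}$ either to itself (when it lies in $N$) or to $0$, so applying $\rho$ to an expression $g=\sum_i h_if_i$ with $g\in J_{S(M_1,M_2)}$ and $f_i\in\widetilde{\bfn}$ shows directly that $N$ generates $J_{S(M_1,M_2)}$, with no detour through $N'$. You need to state and prove this face property explicitly; as written, the generating-set case is not established.
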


For the proof of \lemref{main2}, we use results in \cite{EKS,Sull}.

\noindent
Let $r>0$ be a positive integer, and let $\alpha, \beta \in \zz^{r}_{>0}$ be two vectors of positive integers.
Let
$$
K[X]=K[x^{i}_{j}~|~ i \in [r] , j \in [\alpha_i]] \quad \text{{\rm and}} \quad
K[Y]=K[y^{i}_{k}~|~ i \in [r] , k \in [\beta_i]]
$$
be two multigraded polynomial rings with the multigrading $\deg (x^{i}_{j}) = \deg (y^{i}_{k}) = \bfa^{i} \in \zz^{d}$, where $\alpha_i$ (resp. $\beta_i$) is the $i$-th entry of $\alpha$ (resp. $\beta$).
We write $\cala = \{ \bfa^{1} ,\ldots , \bfa^{r} \}$ and
assume that $\cala$ is linearly independent.
If $I$ and $J$ are homogeneous ideals of $K[X]$ and $K[Y]$, then the quotient rings $R_1=K[X]/I$ and $R_2=K[Y]/J$ are also multigraded by $\cala$.
Consider the polynomial ring
$$
K[Z] = K[z^{i}_{jk} ~|~ i \in [r] , j \in [\alpha_i] , k \in [\beta_i]]
$$
and consider the ring homomorphism
$$
\phi_{I,J} : K[Z] \rightarrow R_1 \otimes_{K} R_2, \qquad z^{i}_{jk} \mapsto x^{i}_{j} \otimes y^{i}_{k}.
$$
The kernel of $\phi_{I,J}$ is called the {\it toric fiber product} of $I$ and $J$.
It is denoted as $I \times_{\cala} J = \ker (\phi_{I,J})$.
The following result is in \cite[Theorem 12 and Corollary 14]{Sull}.

\begin{proposition}
\proplab{fiber}
Suppose that the set $\cala$ of degree vectors is linearly independent.
Let $\BFF{1}$ and $\BFF{2}$ be homogeneous generating sets for $I$ and $J$, respectively.
Then
$$
\bfn={\rm Lift} (\BFF{1}) \cup
{\rm Lift} (\BFF{2}) \cup
{\rm Quad}_{\cala}
$$
is a homogeneous generating set for $I \times_{\cala} J$.
Moreover, if $\BFF{1}$ and $\BFF{2}$ are Gr\"obner bases of $I$ and $J$, then there exists a monomial order such that $\bfn$ is a Gr\"obner basis for $I \times_{\cala} J$.
The sets ${\rm Lift}(\BFF{1})$, ${\rm Lift}(\BFF{2})$, and ${\rm Quad}_{\cala}$ are defined in \cite{Sull}.
\end{proposition}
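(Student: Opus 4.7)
My strategy is to view $J_{S(M_1,M_2)}$ as an elimination ideal of the toric fiber product $J_{\widetilde{\cald}}=J_{\widetilde{\cald}_{M_1}}\times_\cala J_{\widetilde{\cald}_{M_2}}$, with $\cala=\{\bfe_1,\bfe_2\}$, and then apply \propref{fiber} to obtain generators and Gr\"obner bases of $J_{\widetilde{\cald}}$ whose intersection with $K[\widehat{Z}]$ is exactly $N$.

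First I set up the combinatorial bijection between the variables of $K[\widehat{Z}]$ and the bases of $S(M_1,M_2)$: $z^1_{jk}$ ($j\in[n_1]$, $k\in[\gamma_2]$) corresponds to the basis $B_j\cup D_k$ with $c\notin D_k$, and $z^2_{jk}$ ($j\in[\gamma_1]$, $k\in[n_2]\setminus[\gamma_2]$) corresponds to $B_j\cup D_k$ with $c\notin B_j$ and $c\in D_k$. Uniqueness of the decomposition $X=B\cup D$ for each $X\in\calb_\cals$ is forced by the rank axiom: one cannot have both $X\cap E_1$ and $X\cap E_1\setminus\{c\}$ in $\calb(M_1)$, since all bases of $M_1$ share the cardinality ${\rm rk}(M_1)$.

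Next I prove the key identity $J_{\widetilde{\cald}}\cap K[\widehat{Z}]=J_{S(M_1,M_2)}$ by showing that for monomials $u,v\in K[\widehat{Z}]$ the condition $\widetilde{\pi}(u)=\widetilde{\pi}(v)$ is equivalent to $\pi_{S(M_1,M_2)}(u)=\pi_{S(M_1,M_2)}(v)$. The nontrivial direction again uses the cardinality axiom: writing $u=\prod_l z^{i_l}_{j_lk_l}$ and $v=\prod_l z^{i'_l}_{j'_lk'_l}$ with $m$ factors on each side, matching the non-$c$ entries of $\sum_l\bfb_{j_l}$ and $\sum_l\bfb_{j'_l}$ combined with the total-weight equality $|\sum_l\bfb_{j_l}|_1=m\cdot{\rm rk}(M_1)$ forces the $c$-entries to match as well; the analogous argument handles $\sum_l\bfd_{k_l}$, and equality of $w_i$-degrees then follows from the factor counts.

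Third, I apply \propref{fiber} with $\cala=\{\bfe_1,\bfe_2\}$ (linearly independent), $I=J_{\widetilde{\cald}_{M_1}}$ and $J=J_{\widetilde{\cald}_{M_2}}$; under the natural identification of $R_1\otimes R_2$ with the image of $\widetilde{\pi}$, the toric fiber product $I\times_\cala J$ coincides with $J_{\widetilde{\cald}}$. By \thmref{sepa} and \lemref{main1}, $\widetilde{\Bff}_1$ and $\widetilde{\Bff}_2$ are generating sets (respectively Gr\"obner bases) of $I,J$, so $\widetilde{\bfn}$ generates $J_{\widetilde{\cald}}$ (respectively is a Gr\"obner basis for some monomial order $\prec_0$). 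For the Gr\"obner basis assertion I refine $\prec_0$ by an elimination order placing the variables of $K[Z]\setminus K[\widehat{Z}]$ above those of $K[\widehat{Z}]$; the standard elimination theorem then makes $N=\widetilde{\bfn}\cap K[\widehat{Z}]$ a Gr\"obner basis of $J_{\widetilde{\cald}}\cap K[\widehat{Z}]=J_{S(M_1,M_2)}$. For the generating-set statement with arbitrary $\Bff_1,\Bff_2$, I mimic \lemref{gen}: take Gr\"obner bases ${\bf G}_i$ of $J_{\cald_{M_i}}$, so the corresponding $N$ already generates $J_{S(M_1,M_2)}$, and then I transfer the ideal equality $\langle\widetilde{\bfn}(\Bff)\rangle=\langle\widetilde{\bfn}({\bf G})\rangle=J_{\widetilde{\cald}}$ to their intersections with $K[\widehat{Z}]$.

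The main technical obstacle will be this final transfer: intersection with a subring does not in general preserve ``generates the same ideal'' across different generating sets. Overcoming it requires exploiting the specific structural form of $\widetilde{\bfn}$ --- binomials that are either lifts of elements of $\widetilde{\Bff}_i$ or quadratic switching relations --- so that every element of $\widetilde{\bfn}({\bf G})\cap K[\widehat{Z}]$ can be rewritten as a $K[\widehat{Z}]$-combination of elements of $\widetilde{\bfn}(\Bff)\cap K[\widehat{Z}]$, in the same spirit as the proof of \lemref{gen}.
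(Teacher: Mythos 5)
There is a genuine gap, and it is of the most basic kind: your text is not a proof of the statement at issue. The statement is \propref{fiber}, Sullivant's general theorem on toric fiber products: for a linearly independent set $\cala$ of degree vectors, ${\rm Lift}(\BFF{1}) \cup {\rm Lift}(\BFF{2}) \cup {\rm Quad}_{\cala}$ is a homogeneous generating set for $I \times_{\cala} J$, and is a Gr\"obner basis for a suitable order when $\BFF{1}$, $\BFF{2}$ are Gr\"obner bases. What you wrote is instead an argument for \lemref{main2}: you set up the bijection between the variables of $K[\widehat{Z}]$ and the bases of $S(M_1,M_2)$, identify $J_{S(M_1,M_2)}$ with an intersection $J_{\widetilde{\cald}} \cap K[\widehat{Z}]$, and then explicitly ``apply \propref{fiber}'' to obtain generators and Gr\"obner bases of $J_{\widetilde{\cald}}$. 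Using the proposition as a black box is circular if the proposition is what you are asked to establish; nowhere do you address the actual content of the result, namely why the lifted binomials together with the Segre-type quadrics ${\rm Quad}_{\cala}$ suffice to generate the fiber product of two arbitrary homogeneous ideals $I$ and $J$, nor how one constructs a monomial order on $K[Z]$ compatible with the given orders on $K[X]$ and $K[Y]$ for the Gr\"obner statement. In the paper this proposition carries no proof at all: it is quoted from Sullivant (Theorem 12 and Corollary 14 of that paper), where the argument proceeds in the general multigraded setting, using linear independence of $\cala$ to control multidegrees, rewriting any element of $I \times_{\cala} J$ modulo ${\rm Quad}_{\cala}$ into a form where the lifts apply, and verifying the Gr\"obner claim via a weight order built from the orders on $K[X]$ and $K[Y]$. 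None of these ingredients appear in your proposal.

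As a secondary remark: had the target been \lemref{main2}, your outline would be close in spirit to the paper's proof (which also passes through $J_{\widetilde{\cald}}$ via \propref{fiber} and then cuts down to $K[\widehat{Z}]$), except that the paper does not use a generic elimination order; it shows that $K[\widehat{Z}]/J_{\cald}$ is a combinatorial pure subring of $K[Z]/J_{\widetilde{\cald}^{'}}$, which yields at once that both the generating set and the Gr\"obner basis restrict under intersection with $K[\widehat{Z}]$ --- precisely the ``transfer'' difficulty you flag at the end and leave unresolved. But this does not repair the main defect: the statement you were to prove is \propref{fiber} itself, and your proposal assumes it rather than proves it.
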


On the other hand, if $I$ and $J$ are toric ideals, then $I \times_{\cala} J$ is also a toric ideal.
If $K[S]$ and $K[T]$ are polynomial rings, and
\begin{eqnarray*}
\phi : K[X] & \rightarrow & K[S], \quad x^{i}_{j} \mapsto f^{i}_{j}(S), \\
\psi : K[Y] & \rightarrow & K[T], \quad y^{i}_{k} \mapsto g^{i}_{k}(T)
\end{eqnarray*}
are ring homomorphisms, then we can form the toric fiber product homomorphism
$$
\phi \times_{\cala} \psi : K[Z] \rightarrow K[S,T], \quad z^{i}_{jk} \mapsto f^{i}_{j} (S) g^{i}_{k} (T).
$$
If $I=\ker (\phi)$ and $J=\ker (\psi)$ and both ideals are homogeneous with respect to the grading by $\cala$, then $
I \times_{\cala} J = \ker (\phi \times_{\cala} \psi)
$ (see \cite{EKS}).

\begin{proof}[Proof of \lemref{main2}]
Let $\BFF{1}$ and $\BFF{2}$ be generating sets (resp. Gr\"obner bases) for $J_{\cald_{M_1}}$ and $J_{\cald_{M_2}}$.
From \lemref{main1}, \lemref{gen}, and \propref{fiber}, $\widetilde{\bfn}$ is a generating set (resp. a Gr\"obner basis) for $J_{\widetilde{\cald}}$.
Now we consider two integer matrices
\begin{eqnarray*}
\widetilde{\cald}^{'} &=& 
\left(
\begin{array}{ccc|c|ccc|ccc|c|ccc}
\bfb_1 & & \bfb_1 && \bfb_{n_1} && \bfb_{n_1}&\bfb_1&&\bfb_1&&\bfb_{\gamma_1}&&\bfb_{\gamma_1} \\
\bfd_1 &\cdots &\bfd_{\gamma_2} & \cdots & \bfd_1& \cdots & \bfd_{\gamma_2}& \bfd_1& \cdots& \bfd_{n_2}& \cdots & \bfd_1 & \cdots & \bfd_{n_2} \\
\bfe_1 & & \bfe_1 && \bfe_1 & & \bfe_1 & \bfc_1 && \bfc_{n_2} && \bfc_{1} & & \bfc_{n_2} 
\end{array}
\right), \\
\cald &=& 
\left(
\begin{array}{ccc|c|ccc|ccc|c|ccc}
\bfb_1 & & \bfb_1 && \bfb_{n_1} && \bfb_{n_1}&\bfb_1&&\bfb_1&&\bfb_{\gamma_1}&&\bfb_{\gamma_1} \\
\bfd_1 &\cdots &\bfd_{\gamma_2} & \cdots & \bfd_1 & \cdots & \bfd_{\gamma_2}& \bfd_{\gamma_2 +1}& \cdots& \bfd_{n_2}& \cdots & \bfd_{\gamma_2 +1} & \cdots & \bfd_{n_2} 
\end{array}
\right),
\end{eqnarray*}
where
$$
\bfc_{k} =
\begin{cases}
\bfe_{2} & \text{{\rm if}}~k \in [\gamma_2], \\
\bfe_{1} & \text{{\rm otherwise}}.
\end{cases}
$$
Then $J_{\widetilde{\cald}^{'}} = J_{\widetilde{\cald}}$ because $\widetilde{\cald}^{'}$ can be obtained by an elementary row operation on $\widetilde{\cald}$.
Let $\delta =(0,\ldots,0,-1)\in \zz^{d_1 +d_2 + 2}$.
Since the usual inner product $\delta \cdot (\bfb_{j},\bfd_{k},\bfc_{k})$ equals
$$
\begin{cases}
-1 & \text{{\rm if~}} k \in [\gamma_2], \\
0 & \text{{\rm otherwise,}}
\end{cases}
$$
and $\delta \cdot (\bfb_{j},\bfd_{k},\bfe_{1}) =0$,
it follows that a subring $K[\widehat{Z}]/J_{\cald}$ of $K[Z]/J_{\widetilde{\cald}^{'}}$ is a combinatorial pure subring of $K[Z]/J_{\widetilde{\cald}^{'}}$ (see \cite{Ohsugi}).
Thus $
J_{\cald} = J_{\widetilde{\cald}^{'}} \cap
K[\widehat{Z}].
$
In particular, $N$ is a generating set (resp. a Gr\"obner basis) for $J_{\cald}$.
Furthermore, by elementary row operations on $\cald$, we can obtain the integer matrix arising from $S(M_1, M_2)$ with respect to the basepoint $c$.
Therefore $N$ is a generating set (resp. a Gr\"obner basis) for $J_{S(M_1,M_2)}$.
\end{proof}

\begin{theorem}
Classes $\mqg$ and $\mq$ are closed under series and parallel connections and 2-sums.
\end{theorem}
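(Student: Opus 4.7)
The plan is to reduce all three cases (series connection, parallel connection, and 2-sum) to the series connection case, which is essentially handled by \lemref{main2}, using \propref{Blu} (closure under duality and taking minors) as a bridge.

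For series connections, assume $M_1,M_2\in\mq$ (the $\mqg$ argument is parallel) and first suppose $c$ is neither a loop nor a coloop of either matroid. Choose homogeneous quadratic generating sets $\BFF{1}$ and $\BFF{2}$ for $J_{\cald_{M_1}}$ and $J_{\cald_{M_2}}$. The construction $\widetilde{\BFF{i}}$ of Section 2 replaces each quadratic $f\in\BFF{i}$ by the lifts $f^{I}$, which have the \emph{same} degree as $f$, and adjoins the explicitly quadratic binomials $x^{1}_{j_2}x^{2}_{j_1}-x^{1}_{j_1}x^{2}_{j_2}$; hence $\widetilde{\BFF{i}}$ is quadratic. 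Passing to $\widetilde{\bfn}={\rm Lift}(\widetilde{\BFF{1}})\cup{\rm Lift}(\widetilde{\BFF{2}})\cup{\rm Quad}(\widetilde{\cald}_{M_1},\widetilde{\cald}_{M_2})$ again preserves quadraticity, since each $f_k$ has the same degree as $f$ and the elements of ${\rm Quad}$ are quadratic by definition. Intersecting with $K[\widehat{Z}]$ produces a subset $N$ of binomials of degree at most $2$. By \lemref{main2}, $N$ generates $J_{S(M_1,M_2)}$, so $S(M_1,M_2)\in\mq$. For the Gr\"obner basis case the same reasoning applies, using the second assertion of \lemref{main2} to extract a quadratic Gr\"obner basis. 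If instead $c$ is a loop or a coloop of one of the $M_i$, then by the definitions recalled in Section 3, $S(M_1,M_2)$ is a direct sum of one of $M_1,M_2$ with a single-element deletion or contraction of the other, and \propref{Blu} closes this case.

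For parallel connections I would invoke duality: it is standard (see \cite[Proposition~7.1.13]{Oxley}) that
\[
P((M_1;c),(M_2;c))^{\ast}=S((M_1^{\ast};c),(M_2^{\ast};c)).
\]
Given $M_1,M_2\in\mq$ (resp.\ $\mqg$), \propref{Blu} gives $M_1^{\ast},M_2^{\ast}\in\mq$; the series case just proved gives $S(M_1^{\ast},M_2^{\ast})\in\mq$; and \propref{Blu} once more gives $P(M_1,M_2)\in\mq$. Loop/coloop degenerate cases of $P(M_1,M_2)$ again reduce to direct sums with deletions or contractions, handled by \propref{Blu}.

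For 2-sums I would use the definition $M_1\oplus_{2}M_2=S(M_1,M_2)/c$ (equivalently $P(M_1,M_2)\setminus c$), so a 2-sum is a minor of a series connection; closure follows by combining the series case with closure under minors from \propref{Blu}.

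The substantive content is all in \lemref{main2}, so no genuine obstacle remains at this stage; the only thing that needs to be checked carefully is that every operation in the chain ($\widetilde{(\cdot)}$, ${\rm Lift}$, ${\rm Quad}$, intersection with $K[\widehat{Z}]$, duality, minors) preserves the property of having a quadratic generating set (resp.\ a quadratic Gr\"obner basis), and each of these preservations is either immediate from the construction or recorded in \propref{Blu}.
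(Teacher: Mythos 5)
Your proposal is correct and follows essentially the same route as the paper: degenerate loop/coloop cases via direct sums and \propref{Blu}, the series connection via \lemref{main2}, the parallel connection via the duality $P(M_1,M_2)=[S(M_1^{\ast},M_2^{\ast})]^{\ast}$ together with closure under duality, and the 2-sum as a minor of the series connection. You spell out the degree bookkeeping through the lifting constructions more explicitly than the paper does, but the argument is the same.
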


\begin{proof}
Let $M_1$ and $M_2$ be matroids with $E_1 \cap E_2 = \{ c \}$.
Let $S(M_1 , M_2)$ (resp. $P(M_1 , M_2)$) denote a series (resp. parallel) connection of $M_1$ and $M_2$ with respect to the basepoint $c$.

In the case of series and parallel connections, if $c$ is a loop or a coloop of $M_1$ and the theorem holds for $M_1$ and $M_2$,  then it also holds for $S(M_1,M_2)$ and $P(M_1,M_2)$.
Suppose that neither $M_1$ nor $M_2$ has $c$ as a loop or a coloop.
Then by \lemref{main1} and \lemref{main2}, $\mqg$ and $\mq$ are closed under series connections.
Also, $\mqg$ and $\mq$ are closed under parallel connections from \propref{Blu}, and $P(M_1 , M_2) = [S(M^{\ast}_{1} , M^{\ast}_{2})]^{\ast}$ for any matroids $M_1$ and $M_2$ \cite[Proposition 7.1.14]{Oxley}.

In the case of the 2-sum, since $M_1 \oplus_{2} M_2 = S(M_1 ,M_2) / c$, $\mqg$ and $\mq$ are closed under 2-sums.
\end{proof}

Using the above results, we have
\begin{theorem}
\thmlab{app2}
Let $M$ be a matroid.
If $M$ has no minor isomorphic to any of $M(K_4)$, $\calw^{3}$, $P_6$, or $Q_6$, then the toric ideal $J_{M}$ has a quadratic Gr\"obner basis.
\end{theorem}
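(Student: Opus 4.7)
The plan is to reduce the claim to the closure theorems proved above by invoking a structural decomposition result for matroids avoiding the four listed minors. The matroids $M(K_4)$, $\mathcal{W}^3$, $P_6$, and $Q_6$ are precisely the non-uniform $3$-connected matroids on six elements, while every $3$-connected matroid on at most five elements is uniform. Consequently, if $M$ has none of these four as minors, then every $3$-connected minor of $M$ is uniform, and the Cunningham--Edmonds tree decomposition (reducing any matroid to its $3$-connected components along direct sums and $2$-sums) expresses $M$ as a combination of uniform matroids obtained through iterated direct sums and $2$-sums.

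Given this structural description, I would argue by induction on the number of pieces appearing in the decomposition. For the base case, Sturmfels' theorem \cite{Sturmfels} provides an explicit quadratic Gr\"obner basis for $J_{U_{r,n}}$ for every uniform matroid, so $U_{r,n} \in \mqg$. For the inductive step, suppose $M$ is built from two matroids $M_1, M_2 \in \mqg$ by one of the two admissible operations. If $M = M_1 \oplus M_2$, then $M \in \mqg$ by \propref{Blu}; if $M = M_1 \oplus_2 M_2$, then $M \in \mqg$ by the theorem on closure under $2$-sums just proved. Iterating these two cases along the decomposition tree delivers $M \in \mqg$, i.e. $J_M$ has a quadratic Gr\"obner basis, as claimed.

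The main obstacle will be pinning down the structural reduction cleanly: one must quote both that every $3$-connected matroid on at most six elements is either uniform or isomorphic to one of the four listed matroids, and that the pieces of the Cunningham--Edmonds decomposition appear as minors of $M$, so that the excluded-minor hypothesis descends to each piece. Both ingredients are standard and can be found in Oxley's text. Once they are in hand, the remainder of the argument is a mechanical two-case induction that invokes nothing beyond \propref{Blu}, the closure-under-$2$-sums result of this section, and the quadratic Gr\"obner basis for uniform matroids from \cite{Sturmfels}.
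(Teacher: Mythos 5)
Your overall strategy coincides with the paper's: reduce to uniform matroids (which lie in $\mqg$ by Sturmfels' result) using closure of $\mqg$ under minors and direct sums (\propref{Blu}) and under $2$-sums (the theorem just proved). The difference is in how the structural input is obtained. The paper simply cites Chaourar's Corollary~3.1, which says that $M$ avoids the four listed minors if and only if $M$ is a minor of direct sums and $2$-sums of uniform matroids; with that in hand the theorem is immediate. You instead try to derive the structural statement yourself from the Cunningham--Edmonds tree decomposition together with the classification of small $3$-connected matroids, and that derivation contains a genuine gap.

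The gap is the sentence beginning ``Consequently''. Knowing that $M(K_4)$, $\calw^{3}$, $P_6$, $Q_6$ are exactly the non-uniform $3$-connected matroids on six elements (and that all smaller $3$-connected matroids are uniform) does \emph{not} by itself imply that every $3$-connected minor of $M$ is uniform. A $3$-connected matroid on more than six elements could a priori be non-uniform while all of its six-element $3$-connected minors are uniform; to exclude this one needs a chain-type argument (via the Splitter Theorem or the Wheels-and-Whirls Theorem) showing that every non-uniform $3$-connected matroid has a non-uniform $3$-connected minor on exactly six elements. That statement is essentially the content of Chaourar's theorem itself, so your proposal implicitly assumes the very result the paper cites. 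If you either quote Chaourar's characterization directly (as the paper does) or supply the splitter-theorem argument, the remainder of your induction along the decomposition tree --- direct sums handled by \propref{Blu}, $2$-sums by the closure theorem, uniform pieces by \cite{Sturmfels} --- is correct. (A minor further point: the pieces of the Cunningham--Edmonds decomposition are circuits, cocircuits, and $3$-connected matroids; the circuit and cocircuit pieces are uniform, so they cause no trouble, but they are not all $3$-connected and should be accounted for.)
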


\thmref{app2} immediately holds from the following result:

\begin{theorem}
[{\rm \cite[Corollary 3.1]{Chaourar}}]
A matroid $M$ is a minor of direct sums and 2-sums of uniform matroids if and only if $M$ has no minor isomorphic to any of $M(K_4)$, $\calw^{3}$, $P_6$, or $Q_6$.
\end{theorem}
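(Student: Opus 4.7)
The plan is to establish both directions of the biconditional. Define $\cale$ to be the class of matroids that have no minor isomorphic to any of $M(K_4), \calw^{3}, P_6, Q_6$, and $\calu$ to be the class of matroids that arise as minors of direct sums and 2-sums of uniform matroids. I will show $\calu \subseteq \cale$ by a closure argument, and $\cale \subseteq \calu$ by induction on $|E(M)|$.

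For the forward inclusion $\calu \subseteq \cale$, I would first observe that $\cale$ is trivially closed under taking minors, and that each of $M(K_4), \calw^{3}, P_6, Q_6$ is a 3-connected matroid on six elements. Consequently $\cale$ is closed under direct sum, because a connected matroid appearing as a minor of $M_1 \oplus M_2$ must be a minor of one of the summands; and it is closed under 2-sum, because a 3-connected matroid on at least four elements appearing as a minor of $M_1 \oplus_2 M_2$ must already be a minor of $M_1$ or $M_2$ (a standard consequence of matroid connectivity, found e.g.\ in \cite{Oxley}). Since every uniform matroid has only uniform minors and none of the four forbidden matroids is uniform, uniform matroids lie in $\cale$, and therefore so does every matroid obtained from them by direct sums, 2-sums, and taking minors.

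For the reverse inclusion $\cale \subseteq \calu$, the plan is induction on $|E(M)|$. The base cases $|E(M)| \le 5$ are immediate, since every 3-connected matroid of size at most five is uniform and hence lies in $\calu$. For the inductive step, if $M$ is disconnected write $M = M_1 \oplus M_2$ with both summands smaller and in $\cale$, and apply the inductive hypothesis; if $M$ is connected but not 3-connected, a Cunningham--Edmonds-type 2-separation gives $M = M_1 \oplus_2 M_2$ on smaller ground sets, and again induction applies. The essential remaining case, which I expect to be the main obstacle, is when $M$ is 3-connected on at least six elements; here one must show that $M \in \cale$ forces $M$ to be uniform. The natural approach is to invoke Seymour's splitter theorem, or Tutte's wheels-and-whirls theorem, to reduce to a finite problem: any 3-connected non-uniform matroid on at least six elements should admit a 3-connected non-uniform minor on exactly six elements, so it suffices to verify that every 3-connected non-uniform matroid on six elements is isomorphic to one of $M(K_4), \calw^{3}, P_6, Q_6$. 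This reduction to a finite enumeration (carried out by rank, since such matroids have small rank), together with the explicit identification of the forbidden minor inside each non-uniform case, is the combinatorial heart of the argument and constitutes exactly the technical content of Chaourar's proof in \cite{Chaourar}.
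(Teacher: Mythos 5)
The first thing to be clear about is that the paper does not prove this statement at all: it is quoted verbatim, with attribution, from Chaourar's paper, so there is no internal argument to compare yours against. Judged as a proof, your outline has one genuine gap at exactly the crucial point. The heart of the hard direction is the claim that every $3$-connected non-uniform matroid on at least six elements has a $3$-connected non-uniform minor on exactly six elements (equivalently, has one of $M(K_4)$, $\calw^{3}$, $P_6$, $Q_6$ as a minor, given the six-element enumeration). You assert this "should" follow from the splitter or wheels-and-whirls theorem and then defer it to \cite{Chaourar}; but those theorems only produce a single-element deletion or contraction that preserves $3$-connectivity -- they do not preserve non-uniformity. So a minor-minimal $3$-connected non-uniform matroid is not automatically pushed down to six elements: one must separately analyse the case in which every $3$-connected single-element reduction of $M$ is uniform, i.e.\ $M$ is a $3$-connected non-uniform single-element extension or coextension of a uniform matroid, and exhibit a forbidden minor there. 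That analysis is the substantive content of Chaourar's Corollary 3.1, so as written your argument reduces the theorem to the very step for which you cite it; this is circular as a proof (though no worse than what the paper itself does, namely cite the result).

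There is also a smaller, fixable gap in the inductive step. When $M$ is connected but not $3$-connected you write $M = M_1 \oplus_2 M_2$ with $M_1, M_2$ proper minors of $M$, conclude $M_1, M_2 \in \calu$ by induction, and then implicitly use that $\calu$ (the \emph{minor-closure} of the class of direct sums and 2-sums of uniform matroids) is closed under 2-sums; that needs an argument about how minors interact with 2-sums and is not immediate from the definition. The cleaner route is to strengthen the induction and show that every $M \in \cale$ is \emph{itself} a direct sum of 2-sums of uniform matroids: use the canonical (Cunningham--Edmonds) tree decomposition, whose parts are circuits, cocircuits, and $3$-connected matroids, all isomorphic to minors of $M$; circuits and cocircuits are uniform, and the $3$-connected parts are uniform by the key lemma, so no closure property of $\calu$ under 2-sums is needed, and the "minor of" in the statement is absorbed by the easy direction. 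Your easy direction is fine: the four excluded matroids are $3$-connected on six elements, and a $3$-connected minor with at least four elements of a 2-sum is a minor of one of the parts, so $\cale$ is closed under direct sums and 2-sums and contains all uniform matroids.
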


Let $M$ be a matroid on $E$, and let
$$
{\rm rk}:2^{E} \rightarrow \zz_{\ge 0} \quad X \mapsto |B_X|,
$$
where $B_X$ is a basis for $M \setminus (E-X)$.
A function ${\rm rk}$ is said to be the {\it rank function} of $M$.
Let $\lambda_M (X) = {\rm rk} (X) + {\rm rk} (E-X) - {\rm rk}(M)$ for $X \subset E$.
We call $\lambda_M$ the {\it connectivity function} of $M$.
For $X \subset E$, if $\lambda_M (X) < k$, where $k$ is a positive integer, then both $X$ and $(X,E-X)$ are called $k$-{\it separating}.
A $k$-separating pair $(X,E-X)$ for which $\min \{ |X|, |E-X| \} \ge k$ is called a $k$-{\it separation} of $M$ with {\it sides} $X$ and $E-X$.
For all $n \ge 2$, we say that $M$ is $n$-{\it connected} if, for any $k < n$, it has no $k$-separation.

Any matroid which is not 3-connected can be constructed from  3-connected proper minors of itself by a sequence of the operations of direct sums and 2-sums. 
Therefore, in order to prove \conjref{Whiteweak}, it is enough to prove the following conjecture:

\begin{conjecture}
{\rm For any $3$-connected matroid $M$,}
\begin{itemize}
\item[$(1)$] {\rm $J_M$ is generated by quadratic binomial;}
\item[$(2)$] {\rm $J_M$ has a quadratic Gr\"obner basis.}
\end{itemize}
\end{conjecture}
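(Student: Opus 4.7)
The plan is to reduce to the 3-connected case (already justified in the preceding paragraph via direct sum and 2-sum decompositions, combined with the closure results established earlier in the paper), and then to set up an inductive argument along a structural decomposition of 3-connected matroids. The natural vehicle is Tutte's Wheels and Whirls Theorem: every 3-connected matroid $M$ that is not a wheel or a whirl possesses an element $e \in E(M)$ such that either $M \setminus e$ or $M / e$ is again 3-connected. This gives a chain of 3-connected minors terminating at a wheel $\mathcal{W}_n$ or a whirl $\mathcal{W}^n$, suggesting the two-step strategy below.

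First, I would verify the base cases directly. Wheels and whirls are highly structured and low complexity: $\mathcal{W}_3 = M(K_4)$ and similarly sized whirls can be checked by computer algebra, and for general $n$ one would exploit the cyclic symmetry of the bases and the description of $\mathcal{W}_n$ as a graphic matroid (so that Blasiak's theorem gives quadratic generation, although not a priori a quadratic Gr\"obner basis). For $\mathcal{W}^n$ one could try to exhibit an explicit quadratic Gr\"obner basis by identifying a natural term order under which the $S$-pairs between symmetric exchange binomials reduce to zero.

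Second, for the inductive step, given a 3-connected matroid $M$ with a 3-connected minor $N = M\setminus e$ or $N = M/e$ known to lie in $\mathcal{M}_{\mathcal{Q}\mathcal{G}}$, I would attempt to lift a quadratic Gr\"obner basis of $J_N$ to one of $J_M$. The template to imitate is \lemref{main1}: split the bases of $M$ according to whether they contain $e$, introduce auxiliary weight vectors that treat the two strata compatibly with the $N$-basis structure, and append the additional quadratic binomials $x_{B} x_{B'} - x_{C} x_{C'}$ that record symmetric exchanges across the boundary $e \in B \triangle B'$. The output should again be quadratic by construction, so the real content is a Gr\"obner-basis style verification that no higher-degree obstructions are generated.

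The hard part, and the reason the conjecture is open, is precisely this lifting step: a generic single-element deletion or contraction (even one preserving 3-connectivity) is much less rigid than the series and parallel extensions handled in Section~2, and the strata $\{B : e \in B\}$ and $\{B : e \notin B\}$ of $\mathcal{B}(M)$ interact through exchanges whose target basis can shift arbitrarily far within $N$. I do not see a toric fiber product description analogous to \propref{fiber} in this setting. A more modest but concrete program would therefore be to first settle it for 3-connected matroids admitting a decomposition into simpler pieces via Seymour's splitter theorem (taking $N$ to be a fixed 3-connected minor such as $U_{2,4}$, $M(K_4)$, or $\mathcal{W}^3$), and to treat structured subfamilies separately: 3-connected graphic matroids (as a strengthening of Blasiak), 3-connected regular matroids (via Seymour's decomposition through $R_{10}$ and $M(K_5)$), and 3-connected transversal or sparse paving matroids (building on \cite{Conca} and \cite{Bonin}). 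Accumulating these cases would either prove the conjecture or expose the minimal 3-connected counterexample guiding the general argument.
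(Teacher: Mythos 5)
The statement you are addressing is a \emph{conjecture}, and the paper offers no proof of it: it is stated precisely as the open problem to which Conjecture~\ref{conj:Whiteweak} reduces, via the fact (recorded in the paper and used by you correctly) that every matroid that is not $3$-connected decomposes into direct sums and $2$-sums of $3$-connected proper minors, combined with the closure theorems of Sections~2 and~3. So there is no proof in the paper to compare yours against, and your text is not a proof either --- it is a research program, and you say as much yourself.

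The genuine gaps are two. First, the inductive step: the Wheels and Whirls Theorem gives you a $3$-connectivity-preserving single-element deletion or contraction, but as you correctly observe, neither Lemma~\ref{lem:main1} nor Proposition~\ref{prop:fiber} applies there. The whole mechanism of Sections~2 and~3 rests on the fact that a series extension duplicates columns of $\cald_M$ in a controlled way (so the toric ideal changes only by the explicit quadrics $x^{1}_{j_2}x^{2}_{j_1}-x^{1}_{j_1}x^{2}_{j_2}$) and that a series connection is, up to a combinatorial pure subring, a toric fiber product over a linearly independent degree set $\cala$. A generic deletion $M\setminus e$ has no such product structure: the fibers of $\calb(M)\to\calb(M\setminus e)$ are not parametrized independently of the base, so there is no analogue of ${\rm Lift}(\cdot)\cup{\rm Quad}(\cdot)$ and no reason the new relations stay quadratic. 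Second, the base cases are themselves open: quadratic Gr\"obner bases are not known for the wheels $M(\calw_n)=\calw_n$ with $n\ge 3$ or the whirls $\calw^{n}$ with $n\ge 3$ --- indeed Theorem~\ref{thm:app2} explicitly excludes $M(K_4)=M(\calw_3)$ and $\calw^{3}$ as minors, so the paper's own results give you nothing there, and Blasiak's theorem yields only quadratic generation (part $(1)$) for wheels, not part $(2)$. Your proposal is a reasonable map of how one might attack the conjecture, but both the base of the induction and the inductive step remain unproved, so the statement stands as the paper leaves it: open.
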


\section*{Acknowledgement}

The author would like to thank Hidefumi Ohsugi for useful comments and suggestions.

\end{document}